\documentclass[11pt]{amsart}
\usepackage{amsmath,amsfonts, mathtools,amssymb,latexsym,amsthm,adjustbox,mathrsfs,amsbsy, bm,tikz-cd,indentfirst,makecell,graphicx, verbatim, calc, enumerate,xy}
\usepackage[top=1in,bottom=1.3in,left=1.1in,right=1.1in]{geometry}

\theoremstyle{plain}
\newtheorem{theorem}{Theorem}[section]
\newtheorem{lemma}[theorem]{Lemma}

\newtheorem{corollary}[theorem]{Corollary}
\newtheorem{proposition}[theorem]{Proposition}
\newtheorem{remark}[theorem]{Remark}
\newtheorem{definition}[theorem]{Definition}

\newtheorem{example}[theorem]{Example}
\newtheorem{point}[theorem]{}

\newcommand{\q}{\mathfrak{q}}
\newcommand{\p}{\mathfrak{p}}
\newcommand{\m}{\mathfrak{m}}

\newcommand{\R}{\mathcal{R}}
\newcommand{\F}{\mathcal{F}}
\newcommand{\Z}{\mathbb{Z}}
\newcommand{\N}{\mathbb{N}}
\newcommand{\wt}{\widetilde}

\newcommand{\Hom}{\operatorname{Hom}}

\newcommand{\Ext}{\operatorname{Ext}}

\usepackage{csquotes}
\usepackage{graphicx}
\usepackage{subfigure}
\usepackage{epsfig}
\usepackage{epstopdf}
\usepackage{float}
\theoremstyle{plain}

\usepackage{xcolor}
\pagestyle{plain}
\usepackage{titlesec}

\titleformat{name=\section}{}{\thetitle.}{0.5em}{\centering\scshape\large}

\begin{document}
\title{\large \textbf{ Quasi-Hilbert rings and Ratliff-Rush filtrations }}
\author{\textsc{Tony J. Puthenpurakal}}
\email{tputhen@math.iitb.ac.in}

\author{\textsc{Samarendra Sahoo}}
\email{samarendra.s.math@gmail.com}

\address{Department of Mathematics, IIT Bombay, Powai, Mumbai 400 076, India}

\date{\today}

\subjclass{Primary 13A30, 13C14; Secondary 13A02, 13D40, 13D45}
\keywords{Associated graded rings, equidimensional and unmixed rings, Hilbert rings}

\begin{abstract}
Let $A$ be a non-Gorenstein Cohen-Macaulay ring of dimension $d\geq 1$, $I$ an ideal of $A$, and suppose $\omega_A$ is a canonical $A$-module. Set 
$$r(I,\omega_A) = \bigcup_{n \ge 0} (I^{\,n+1} \omega_A : I^{\,n} \omega_A) \subseteq A .$$
We show that the ideal $r(I,-)$ is $\omega_A$-invariant.  Motivated by this property, we introduce a new class of rings, which we call \emph{quasi-Hilbert rings}.  
We provide several examples of quasi-Hilbert rings and discuss a number of their applications. Let $A$ be a local ring with maximal ideal $\mathfrak{m}$. We prove that $A$ is quasi-Hilbert iff $\widehat{A}$ is quasi-Hilbert, where $\widehat{A}$ is the completion of $A$ w.r.t. $\mathfrak{m}.$ If $d\geq 2$ and $x\in \mathfrak{m}\setminus \mathfrak{m}^2$ is an $A\bigoplus \omega_A$-superficial element. We prove that if $A$ is quasi-Hilbert, then so is $A/(x)$. Writing $\widetilde{I}$ for the Ratliff–Rush closure of an ideal
$I$, we also provide sufficient conditions ensuring the vanishing of $r(I^n,\omega_A)/\wt{I^n}$ for all $n\geq 1.$

\end{abstract}
\maketitle

\section{Introduction}
Let $A$ be a commutative Noetherian ring with unity, $I$ an ideal of $A$, and $M$ a finitely generated $A$-module. For each $n \ge 0$, define

$$r(I^n, M) := \bigcup_{k \ge 0} (I^{n+k+1} M : I^k M) \subseteq A.$$

The ideal $r(I^n, M)$ is called the \emph{Ratliff-Rush ideal of $I^n$ with respect to $M$}; in particular, if $M = A$, we write $\widetilde{I^n} := r(I^n, A)$, which is the \emph{Ratliff-Rush closure} of $I^n$. The Ratliff–Rush ideal enjoys several important properties: The family

$$\mathcal{F}^I_M := \{ r(I^n, M) \}_{n \ge 0}$$

forms an \emph{$I$-filtration} (for the definition of $I$-filtration see \ref{Ifiltration}). If $I$ contains a non-zero divisor and ann$(M)=0$, then $\F^I_M$ is an $I$-stable filtration (see \cite{Zulfeqarr}, Theorem 5.9). Also, if $(A,\m)$ is an analytically unramified local ring with $I$ containing a non-zero divisor, then there exists a rank-one module $M$ such that

$$r(I^n, M) = \overline{I^n} \quad \text{for all } n \ge 1,$$

where $\overline{I^n}$ denotes the \emph{integral closure} of $I^n$ (see \cite{samunmixed}, Lemma 3.6).

Now, let $(A, \mathfrak{m})$ be a local ring of dimension $d$ and let $I$ be an $\mathfrak{m}$-primary ideal. Denote by
$G_I(A) := \bigoplus_{n \ge 0} I^n/I^{n+1}$
the \emph{associated graded ring} of $A$ with respect to $I$, and let $\ell(M)$ denote the length of an $A$-module $M$. The \emph{Hilbert-Samuel function} of $M$ with respect to $I$ is defined by

$$H_{M,I}(n) := \ell(M/I^n M),$$

and it is well known that there exists a polynomial $P_{M,I}(n)$ of degree $d$ such that $H_{M,I}(n) = P_{M,I}(n)$ for all sufficiently large $n$. An important property of the Ratliff--Rush ideal $K := r(I, M)$ is that it is the \emph{unique largest ideal containing $I$} for which the Hilbert-Samuel polynomial remains unchanged, that is,
$$P_{M,K}(n) = P_{M,I}(n) \quad \text{for all large } n.$$

Recall, a Noetherian ring is said to be a Hilbert ring if the Jacobson ideal is nilpotent, where the Jacobson ideal is the intersection of all maximal ideals.

Let $A$ be a Cohen-Macaulay ring and $\omega_A$ be a canonical $A$-module. In this paper, we study an invariant of canonical $A$-modules and prove that $r(I,-)$ is a $\omega_A$-invariant (see Proposition \ref{inv}). We then prove:
\begin{theorem}[Theorem \ref{Hilb}]\label{1}
     Let $A$ be a Cohen-Macaulay ring, $J$ be the Jacobson ideal of $A,$ and $\omega_A$ be a canonical $A$-module. Then TFAE
     \begin{enumerate}
         \item $A$ is a Hilbert ring.
         \item $r(J,\omega_A)=A$.
         \item $r(J^n,\omega_A)=A$ for all $n\geq 1.$
     \end{enumerate}
\end{theorem}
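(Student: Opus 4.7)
The plan is to reduce everything to a Nakayama-type argument applied to a finitely generated submodule of $\omega_A$. The two key inputs are: (a) the canonical module $\omega_A$ of a Cohen-Macaulay ring (when it exists) is a faithful finitely generated $A$-module, i.e.\ $\operatorname{ann}(\omega_A)=0$; and (b) $J$ is contained in (in fact equal to) the Jacobson radical of $A$, so Nakayama's lemma applies to any finitely generated $A$-module $N$ satisfying $JN=N$.

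I would first dispatch $(2)\Rightarrow(1)$, which is the heart of the argument. Assume $r(J,\omega_A)=A$, so $1\in r(J,\omega_A)$. Unwinding the union, there exists $k\geq 0$ with $J^{k}\omega_A\subseteq J^{k+1}\omega_A$. Since the reverse inclusion is automatic, $J^{k}\omega_A=J^{k+1}\omega_A$, i.e.\ $J\cdot(J^{k}\omega_A)=J^{k}\omega_A$. As $J^{k}\omega_A$ is finitely generated and $J$ lies in the Jacobson radical, Nakayama forces $J^{k}\omega_A=0$. Faithfulness of $\omega_A$ then gives $J^{k}\subseteq \operatorname{ann}(\omega_A)=0$, so $J$ is nilpotent and $A$ is a Hilbert ring.

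The implication $(1)\Rightarrow(3)$ is then essentially for free: if $J^{m}=0$ and $n\geq 1$, choose $k$ with $nk\geq m$, so $(J^{n})^{k}\omega_A=0\subseteq (J^{n})^{k+1}\omega_A$. This places $1$ in the appropriate colon ideal, hence $r(J^{n},\omega_A)=A$. Finally $(3)\Rightarrow(2)$ is immediate by taking $n=1$.

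The only real obstacle is the faithfulness of $\omega_A$, which is the standard fact that a canonical module over a Cohen-Macaulay ring has zero annihilator; I would either cite this directly (e.g.\ Bruns--Herzog) or give a one-line reminder, noting that $\omega_A$ is maximal Cohen-Macaulay and of generic rank one on each component so that $\operatorname{ann}(\omega_A)$ is contained in every minimal prime and, being an ideal of a Cohen-Macaulay ring whose zero ideal is unmixed, must itself vanish. Everything else is a clean bookkeeping of the definition of $r(-,\omega_A)$.
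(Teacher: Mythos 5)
Your proof is correct and follows essentially the same route as the paper: both reduce $(2)\Rightarrow(1)$ to the equality $J^{k}\omega_A=J^{k+1}\omega_A$, kill $J^{k}\omega_A$ by a Nakayama-type argument (the paper phrases it via the determinant trick, producing $(1+\alpha)J^{k}\omega_A=0$ with $\alpha\in J$), and then invoke faithfulness of $\omega_A$ to get $J^{k}=0$, while $(1)\Rightarrow(2),(3)$ is the same observation that $J$ nilpotent makes the relevant colon ideals equal to $A$. No substantive difference from the paper's argument.
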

In particular, the Rees algebra $\R(\F^J_{\omega_A})=\bigoplus_{n\geq 0}r(J^n,\omega_A)t^n=A[t]$ is a Noetherian ring but not a finitely generated $\R^J$-module. Let $\wt{\R}^J=\bigoplus_{n\geq 0}\wt{J^n}$ be the Rees algebra with respect to the Ratliff-Rush filtration $\{\wt{J^n}\}_{n\geq 0}.$ If $J$ is a regular ideal (contains a non-zero divisor), then $\wt{\R}^J$ and $\R(\F^J_{\omega_A})$ are finitely generated $\R^J$-module (see Remark \ref{rmk:tildeI^nM}(1), \ref{remark}(3)). Thus, $\R(\F^J_{\omega_A})/\wt{\R}^J$ is a  finitely generated $\R^J$-module and of dimension $\leq \dim(A)$ (see Lemma \ref{up}). Also note that if $A$ is Gorenstein then $\R(\F^J_{\omega_A})/\wt{\R}^J=0$ (see Remark \ref{remark}(d)).

Motivated by this, we define \emph{quasi-Hilbert} ring as follows:  Let $A$ be a Cohen-Macaulay ring of dimension $d\geq 1$ and $\omega_A$ be a canonical $A$-module. Let  $J\neq 0$ be the Jacobson radical ideal of $A$ that contains a non-zero divisor. We say $A$ is a \emph{quasi-Hilbert} ring if the Krull dimension of $\R(\F^J_{\omega_A})/\wt{\R}^J$ is equal to $\operatorname{dim}(A)$. In particular, a quasi-Hilbert ring is not Gorenstein. 

Note that the larger the Krull dimension of the graded ring $\R{(\F^J_{\omega_A})}/\wt{\R}^J$, the closer the ring $A$ behaves to a Hilbert ring.

We prove:
\begin{theorem}[Theorem \ref{Thm:main}]\label{first}
 Let $A$ be a semi-local Cohen–Macaulay ring of dimension $d\geq 1$, which is neither Gorenstein nor a Hilbert ring. Suppose $\omega_A$ is a canonical $A$-module. Let $J$ be the Jacobson ideal of $A$. Set $E=\R(\F^J_{\omega_A})/\wt{\R}^J$. Then the following holds.
  \begin{itemize}
  \item[{(a)}] If $J$ is regular then the function $n\mapsto \ell{(E_n)}$ is of polynomial type.
\item[{(b)}] If $A$ is a local ring with $G_J(A)$ unmixed then either $E=0$ or the function $n\mapsto \ell(E_n)$ coincides with a polynomial of degree $d-1$. In particular, if $E\neq 0$ then $A$ is a quasi-Hilbert ring.
  \end{itemize}
  \end{theorem}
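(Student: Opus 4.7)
For part (a), I would proceed in three steps. First, invoke the cited results (Theorem~5.9 of \cite{Zulfeqarr} together with Remarks~\ref{rmk:tildeI^nM}(1) and~\ref{remark}(3)) to see that both $\R(\F^J_{\omega_A})$ and $\wt{\R}^J$ are finitely generated as graded $\R^J$-modules, so that their quotient $E$ inherits this property. Second, take $k=0$ in the defining union of $\wt{J^n}$ to get $\wt{J^n}\supseteq J^{n+1}$, whence $E_n = r(J^n,\omega_A)/\wt{J^n}$ embeds into $A/\wt{J^n}$, itself a quotient of $A/J^{n+1}$; since $A$ is semi-local and $J$ is its Jacobson radical, $A/J$ is Artinian semisimple and $A/J^{n+1}$ has finite length over $A$, so each $E_n$ does too. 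Third, apply the standard Hilbert--Serre theorem for finitely generated graded $\R^J$-modules with finite-length components to produce the polynomial $p$ with $\ell(E_n) = p(n)$ for $n \gg 0$.

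For part (b), specialize to the local setting with $J = \m$ and $G := G_\m(A)$ unmixed. Lemma~\ref{up} gives $\dim E \le d$, and standard graded theory gives $\deg p = \dim E - 1$ whenever $E\neq 0$, so the assertion reduces to showing $\dim E = d$ under the hypothesis $E \neq 0$. If $E \neq 0$, then $E/\m E$ is a non-zero finitely generated graded $G$-module (Nakayama); I would argue that unmixedness of $G$ (no embedded associated primes, every minimal prime of dimension $d$) together with the canonical-module origin of $E$ forces the annihilator of $E/\m E$ in $G$ to be contained in some associated prime of $G$, hence in a prime of dimension $d$, and then lift this back to conclude $\dim_{\R^\m} E = d$.

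The main obstacle lies precisely in this dimension lower bound, since unmixedness of $G$ alone is insufficient: a general non-zero f.g. graded $G$-module can have dimension strictly less than $d$ (e.g., $G/(x_1,\ldots,x_{d-1})$ if it happens to be non-zero). The argument must therefore exploit the specific structure of $E$ coming from the canonical module. I would pursue this either (i) by exhibiting a natural morphism relating an $\omega_A$-derived graded module to $E$ so that $\Ass(\omega_A) = \operatorname{Assh}(A)$ (whose members all have dimension $d$, since $A$ is Cohen--Macaulay, hence equidimensional) forces at least one associated prime of $E$ to be full-dimensional; or (ii) by induction on $d$, using an $A \oplus \omega_A$-superficial element $x \in \m \setminus \m^2$ as foreshadowed in the abstract, reducing modulo $x$ to a $(d-1)$-dimensional quasi-Hilbert situation and using that passage preserves both the numerator and denominator of $E$ well enough to deduce the polynomial degree, with the base case $d=1$ handled by direct length analysis of the superficial reduction.
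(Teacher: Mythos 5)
Part (a) of your proposal is correct and essentially parallel to the paper's argument: the paper also reduces to showing $\ell(E_n)<\infty$ (it does so by localizing at non-maximal primes, where $J_{\p}=A_{\p}$ kills $(E_n)_{\p}$, while you use the embedding $E_n\hookrightarrow A/\wt{J^n}$ together with semi-locality), and both then conclude that the Hilbert function of a finitely generated graded $\R^J$-module with finite-length components is of polynomial type.

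The genuine gap is in part (b), and you have in effect named it yourself: you never prove the lower bound $\dim E=d$ (equivalently that $\ell(E_n)$ agrees with a polynomial of degree $d-1$) when $E\neq 0$; you only describe two strategies you ``would pursue.'' The paper does not establish this bound from scratch either: it first uses unmixedness of $G_{\m}(A)$ to conclude $\operatorname{grade}(G_{\m}(A)_+,G_{\m}(A))>0$ (an unmixed graded ring of positive dimension has no associated prime containing the irrelevant ideal), whence $\wt{\m^n}=\m^n$ for all $n$ by Remark \ref{rmk:tildeI^nM}(2), so that $E=\bigoplus_{n}r(\m^n,\omega_A)/\m^n$; the degree-$(d-1)$ statement is then exactly Theorem 3.4 of \cite{samunmixed}, a prior result of the authors. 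Your proposal misses both this reduction $\wt{\m^n}=\m^n$ and any substitute for that cited theorem. Moreover, neither of your sketched routes closes the gap as stated: in (i), $\Ass(\omega_A)=\operatorname{Assh}(A)$ holds automatically for any Cohen--Macaulay local ring, so by itself it gives no control on the associated primes of $E$, and no morphism forcing a full-dimensional prime into $\operatorname{Supp}(E)$ is exhibited; in (ii), the superficial-element comparison runs in the wrong direction --- Lemma \ref{contain}(1) embeds $r(\m^n,\omega_A)/(\wt{\m^n}+xr(\m^{n-1},\omega_A))$ into the corresponding module over $A/(x)$, which is what allows the paper to pass the quasi-Hilbert property \emph{from} $A$ \emph{to} $A/(x)$ (Theorem \ref{modx}); an induction on $d$ proving that $A$ itself is quasi-Hilbert would need the converse transfer, as well as descent of unmixedness of $G_{\m}(A)$ to $G_{\bar\m}(A/(x))$, neither of which you address. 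So as written, part (b) remains unproved.
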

We also provide an example that demonstrates the necessity of the unmixedness condition of $G_I(A)$ in Theorem \ref{first} (see Example \ref{ex}). As an application of a quasi-Hilbert ring, we prove:

\begin{theorem}[Corollary \ref{max}] \label{2}
     Let $(A,\m)$ be a Cohen-Macaulay local domain of dimension $d\geq 1$. Suppose $\omega_A$ is the canonical module of $A.$ Let $M$ be a maximal Cohen-Macaulay $A$-module and $N_i=\operatorname{Syz}^A_i(M)$ be the $i$-th syzygy of $M$.  If $A$ is a quasi-Hilbert ring then the function  $n\mapsto \ell(r(\m^n,N^{\dagger}_i)/\wt{\m^n})$ coincides with a polynomial of degree $d-1$ for all $i\geq 1$, where $N^{\dagger}_i=\Hom(N_i,\omega_A).$
\end{theorem}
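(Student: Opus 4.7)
The plan is to reduce Corollary \ref{max} to the Hilbert-polynomial statement for the module $E = \R(\F^\m_{\omega_A})/\wt{\R}^\m$ already encoded in the quasi-Hilbert hypothesis, using the $\omega_A$-invariance of $r(\m^n,-)$ from Proposition \ref{inv}.

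First I would verify that each $N_i^{\dagger} = \Hom(N_i,\omega_A)$ is a maximal Cohen--Macaulay $A$-module. Since $A$ is Cohen--Macaulay local and $M$ is MCM, a depth chase along a minimal free resolution $F_\bullet \to M$ shows that $N_i = \operatorname{Syz}^A_i(M)$ is MCM for every $i \geq 1$. Because $\Ext^j_A(N_i,\omega_A)=0$ for $j \geq 1$ whenever $N_i$ is MCM over a CM ring, applying $\Hom(-,\omega_A)$ to $0 \to N_{i+1} \to F_i \to N_i \to 0$ yields an exact sequence $0 \to N_i^{\dagger} \to F_i^{\dagger} \to N_{i+1}^{\dagger} \to 0$ with $F_i^{\dagger} \cong \omega_A^{\rank F_i}$, confirming inductively that every $N_i^{\dagger}$ is MCM. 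I would then invoke Proposition \ref{inv}: the $\omega_A$-invariance of $r(\m^n,-)$ on MCM modules arising as canonical duals yields the identification $r(\m^n, N_i^{\dagger}) = r(\m^n, \omega_A)$ for all $n \geq 1$. The corollary thereby reduces to showing that $n \mapsto \ell\bigl(r(\m^n,\omega_A)/\wt{\m^n}\bigr)$ agrees for large $n$ with a polynomial of degree $d-1$.

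This reduction is then settled by combining the quasi-Hilbert hypothesis with standard graded Hilbert theory. Because $A$ is a Cohen--Macaulay local domain, $\m$ contains a nonzerodivisor, so by the remarks recalled in the introduction the graded module $E = \R(\F^\m_{\omega_A})/\wt{\R}^\m$ is finitely generated over the Rees algebra $\R^\m = A[\m t]$, with each component $E_n = r(\m^n,\omega_A)/\wt{\m^n}$ of finite length: both $r(\m^n,\omega_A)$ and $\wt{\m^n}$ are $\m$-primary ideals (the former is contained in $\m$ by faithfulness of $\omega_A$, and contains $\m^n$ trivially), so $E_n$ is a finitely generated module annihilated by some power of $\m$. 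Quasi-Hilbertness provides $\dim E = d$, and the standard Hilbert-function theory for finitely generated graded modules with finite-length components over a Noetherian graded $A$-algebra generated in degree one then forces $n \mapsto \ell(E_n)$ to coincide for large $n$ with a polynomial of degree $\dim E - 1 = d-1$.

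The principal obstacle in this plan is the identification $r(\m^n, N_i^{\dagger})=r(\m^n,\omega_A)$: one must confirm that the $\omega_A$-invariance of Proposition \ref{inv} applies uniformly to every MCM module of the form $P^{\dagger}$ with $P$ itself MCM, and in particular to the duals of syzygies of $M$. Once that invariance is in hand, the polynomial growth of degree $d-1$ is a formal consequence of the quasi-Hilbert definition and does not even require the unmixedness assumption of Theorem \ref{first}(b).
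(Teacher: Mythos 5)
There is a genuine gap at the step you yourself flag as the ``principal obstacle'': the identification $r(\m^n,N_i^{\dagger})=r(\m^n,\omega_A)$ does not follow from Proposition \ref{inv}. That proposition only compares two \emph{canonical} modules, which by uniqueness differ by tensoring with a rank-one locally free module; a dual $N_i^{\dagger}=\Hom(N_i,\omega_A)$ of an MCM syzygy is maximal Cohen--Macaulay but in general is not a canonical module (its rank is typically $>1$), so the invariance simply does not apply, and the asserted equality is neither established nor needed. What the paper actually uses is weaker and comes out of the exact sequences you already wrote down: dualizing $0\to N_{i+1}\to A^{\mu(N_i)}\to N_i\to 0$ gives a surjection $\omega_A^{\mu(N_i)}\to N_{i+1}^{\dagger}$, and by \cite[Proposition 1.4(a,b)]{Zulfeqarr} a surjection only yields the inclusion $r(\m^n,\omega_A)=r(\m^n,\omega_A^{\mu})\subseteq r(\m^n,N_{i+1}^{\dagger})$, hence an injection $r(\m^n,\omega_A)/\wt{\m^n}\hookrightarrow r(\m^n,N_{i+1}^{\dagger})/\wt{\m^n}$ and a \emph{lower} bound of degree $d-1$ on the growth (this is Theorem \ref{hil}, applicable since $\ann(N_i^{\dagger})=0$ for an MCM module over a CM domain).

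Because you replaced this inclusion by an (unproved) equality, your reduction collapses everything onto $E=\R(\F^{\m}_{\omega_A})/\wt{\R}^{\m}$ and you never address the matching \emph{upper} bound for the module actually in question: one still must know that $n\mapsto \ell\bigl(r(\m^n,N_i^{\dagger})/\wt{\m^n}\bigr)$ is of polynomial type of degree at most $d-1$. This follows as in Lemma \ref{up} and Remark \ref{rmk:tildeI^nM}: since $\m$ is regular and $\ann(N_i^{\dagger})=0$, the filtration $\F^{\m}_{N_i^{\dagger}}$ is $\m$-stable, so $\bigoplus_{n\ge 0} r(\m^n,N_i^{\dagger})/\wt{\m^n}$ is a finitely generated $\R^{\m}$-module with finite-length components embedding into $\bigoplus_{n\ge 0}A/\wt{\m^n}$, whence its dimension is at most $d$. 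Your third paragraph (quasi-Hilbert $\Rightarrow$ degree $d-1$ for $E$) is fine and matches Theorem \ref{Thm:main}(a), and your MCM verification of $N_i^{\dagger}$ is exactly the paper's; the missing ideas are the surjection-plus-monotonicity argument in place of the invariance claim, and the upper-bound step for the $N_i^{\dagger}$-module.
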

We also show that the quasi-Hilbert property of $A$ is preserved under passage to the completion of $A$ and upon reduction modulo a superficial element (for the definition of superficial element see \ref{sup}).

\begin{theorem}[Theorem \ref{hat}, \ref{modx}]\label{3}
     Let $(A,\m)$ be a Cohen-Macaulay local ring of dimension $d\geq 1$. Suppose $\omega_A$ is a canonical $A$-module.
     \begin{enumerate}
         \item $A$ is quasi-Hilbert iff  $\hat{A}$ is quasi-Hilbert, where $\hat{A}$ is the completion of $A$ w.r.t. $\m$.
         \item Let $d\geq 2$. Assume $A$ has an infinite residue field. Let $x\in \m\setminus\m^2$ be an $A\bigoplus \omega_A$-superficial element. If $A$ is quasi-Hilbert, then so is $A/(x)$.
     \end{enumerate}
         
\end{theorem}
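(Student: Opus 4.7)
For part (1), the plan is to base change along the faithfully flat map $A\to\hat A$. Since $\hat A$ is flat and $\omega_A$ is finitely generated, $\omega_{\hat A}\cong\hat A\otimes_A\omega_A$; the formation of $\m^n\omega_A$, of colon ideals $(\m^{n+k+1}\omega_A:_A\m^k\omega_A)$, and of directed unions all commute with flat base change. I would thus conclude $r(\m^n,\omega_A)\otimes_A\hat A=r(\m^n\hat A,\omega_{\hat A})$ and $\wt{\m^n}\otimes_A\hat A=\wt{\m^n\hat A}$, so the analogous construction over $\hat A$ produces $\hat E\cong E\otimes_A\hat A$. Each $E_n$ is a subquotient of $A/\wt{\m^n}$, which has finite length, so completion yields $\ell(E_n)=\ell(\hat E_n)$; hence $E$ and $\hat E$ share the same Hilbert polynomial and therefore the same Krull dimension. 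Since $\dim A=\dim\hat A$, the quasi-Hilbert property transfers in either direction.

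For part (2), the plan is a Sally-machine reduction. Set $\bar A:=A/(x)$, $\bar\omega:=\omega_A/x\omega_A$, and $\bar\m:=\m/(x)$. Since $A$ is Cohen-Macaulay of dimension $d\ge 2$, $\depth\omega_A\ge 1$, and by superficiality $x$ is regular on both $A$ and $\omega_A$. Thus $\bar A$ is Cohen-Macaulay of dimension $d-1$ with canonical module $\bar\omega$, and the construction parallel to that for $A$ yields $\bar E:=\R(\F^{\bar\m}_{\bar\omega})/\wt{\R}^{\bar\m}$. The goal is to prove $\dim\bar E=d-1$.

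The first substep is to show multiplication by $x$ is eventually regular on the graded $\R^\m$-module $E$. Using the standard consequence of superficiality $(\m^{n+1}:_A x)=\m^n$ for $n\gg 0$, a direct computation shows that if $a\in r(\m^n,\omega_A)$ and $xa\in\wt{\m^{n+1}}$, then $a\in\wt{\m^n}$ for $n$ sufficiently large; thus $(0:_E x)$ is a finite-length graded module. The short exact sequence
$$0\to (0:_E x)(-1)\to E(-1)\xrightarrow{\,x\,}E\to E/xE\to 0$$
then yields $\dim(E/xE)=\dim E-1=d-1$.

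The main obstacle is the second substep: identifying $E/xE$ with $\bar E$ up to graded pieces whose length grows strictly slower than a polynomial of degree $d-1$. Concretely, I need to compare $\wt{\m^n}\cdot\bar A$ with $\wt{\bar\m^n}$, and $r(\m^n,\omega_A)\cdot\bar A$ with $r(\bar\m^n,\bar\omega)$. For the Ratliff-Rush closure this is classical, via an Artin-Rees argument using $x$ regular on $A$. For $r(\m^n,\omega_A)$ one must run the analogous argument inside $\omega_A$: the $\omega_A$-superficiality of $x$ gives $x\omega_A\cap\m^N\omega_A=x\m^{N-c}\omega_A$ for $N\gg c$, which controls the discrepancy between $((\m^{n+k+1}\omega_A+x\omega_A):_A\m^k\omega_A)$ and $(\m^{n+k+1}\omega_A:_A\m^k\omega_A)+(x)$. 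Once both comparisons are established, the Hilbert polynomials of $E/xE$ and $\bar E$ agree, so $\dim\bar E=d-1$, completing the argument.
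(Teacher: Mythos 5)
Your part (1) is correct and follows the paper's own route: base change along the faithfully flat map $A\to \hat A$, the compatibility $r(\m^n,\omega_A)\otimes_A\hat A\cong r(\m^n\hat A,\omega_A\otimes_A\hat A)$ (the paper quotes Puthenpurakal--Zulfeqarr, Prop.~1.4(d), for this), and preservation of lengths of the graded pieces.

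For part (2) there is a genuine gap in your second substep. Your plan is to identify $E/xE$ with $\bar E=\bigoplus_n r(\bar\m^n,\omega_{A/x})/\wt{\bar\m^n}$ up to an error of small growth, and for that you need an \emph{upper} bound on $r(\bar\m^n,\omega_A/x\omega_A)$ in terms of $(r(\m^n,\omega_A)+(x))/(x)$. The Artin--Rees type control $x\omega_A\cap\m^N\omega_A=x\m^{N-c}\omega_A$ only tells you that $a\m^k\omega_A\subseteq\m^{n+k}\omega_A+x\m^{k-c}\omega_A$ for $a$ in the preimage of $r(\bar\m^n,\bar\omega)$; it does not let you split $a$ itself as an element of $r(\m^n,\omega_A)+(x)$, and no stabilization rescues you for $n\gg 0$: unlike $\wt{\bar\m^n}=\bar\m^n$ for large $n$, the modules $r(\bar\m^n,\bar\omega)/\bar\m^n$ do \emph{not} become trivial (their growth is exactly what is being measured), so the reverse comparison is a substantive claim that your sketch does not establish. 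There is also a quantitative slip: since $\ell((E/xE)_n)$ itself grows like a polynomial of degree $d-2$, an error growing ``strictly slower than degree $d-1$'' is not small enough; you would need it to be $o(n^{d-2})$.

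The fix --- and this is how the paper argues --- is that the two-sided comparison is unnecessary. The upper bound $\dim\bar E\le d-1$ is automatic (as in Lemma \ref{up}, since the graded pieces of $\bar E$ have finite length over the $(d-1)$-dimensional ring $A/(x)$), so one only needs the \emph{easy} one-sided inclusion, namely Lemma \ref{contain}: $(r(\m^n,\omega_A)+(x))/(x)\subseteq r(\bar\m^n,\omega_A/x\omega_A)$ together with $(x)\cap r(\m^n,\omega_A)=x\,r(\m^{n-1},\omega_A)$ (Zulfeqarr, Prop.~8.2). This embeds $C_n=(E/xtE)_n$ into $D_n=r(\bar\m^n,\omega_{A/x})/\bigl((\wt{\m^n}+(x))/(x)\bigr)$, and since $\wt{\bar\m^n}=(\wt{\m^n}+(x))/(x)=\bar\m^n$ for $n\gg0$, one gets $\ell(\bar E_n)\ge\ell(C_n)$ for $n\gg0$, whence $\dim\bar E\ge d-1$. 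Your first substep (eventual injectivity of $xt$ on $E$, giving $\dim(E/xtE)=d-1$) is fine and is a mild variant of the paper's use of Prop.~8.2, which gives injectivity in all degrees; but as written, your argument for the crucial comparison does not go through, whereas replacing it by the one-sided inclusion closes the proof.
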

For a regular ideal $I$ and a maximal Cohen-Macaulay $A$-module $M$, we investigate the relationship between the $r(I^n,M)$ and the Ratliff-Rush closures $\widetilde{I^n}$. In particular, we provide sufficient conditions guaranteeing that $r(I^n,M)=\widetilde{I^n}$ for all $n \ge 1$ as follows:

\begin{theorem}[Theorem \ref{v2}] \label{last}
    (with the same hypothesis of Theorem \ref{vanish}) If $G_I(A)$ is unmixed then the following holds.
    \begin{enumerate}
        \item $E=\bigoplus_{n\geq 1}r(I^n,M)/\wt{I^n}$ has dimension equal to $d$ or $E=0$ as $\R^I$-module.
        \item Suppose $E\neq 0.$ Let $\q$ be a prime ideal of $A$ containing $I.$ Then the $\dim(E_{\q})=\operatorname{ht}(\q)$ as a module over $\widehat{{\R}^{I_{\q}}}=\bigoplus_{n\in \Z}{I^n_{\q}}t^n,$ where $\wt{I^n_{\q}}=A_{\q}$ for all $n\leq 0.$ 
        \item If $M_{\q}$ is free for some prime $\q$ containing $I$ then $r(I^n,M)=\wt{I^n}$ for all $n\geq 1.$ 
    \end{enumerate}
\end{theorem}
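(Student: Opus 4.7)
The plan is to treat (1) as a dimension dichotomy coming from the associated-prime structure of $G_I(A)$, to derive (2) by a flat localization argument at $\q$, and to extract (3) as a direct consequence of the free-module computation combined with (1) and (2).

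For (1), the starting point is that under the hypothesis of Theorem~\ref{vanish} the ideal $I$ is regular, so both $\wt{\R}^I$ and $\R(\F^I_M)$ are finitely generated over $\R^I$ (as recorded after Theorem~\ref{1}). Thus $E$ is a finitely generated graded $\R^I$-module sitting in the short exact sequence
\[
0 \to \wt{\R}^I \to \R(\F^I_M) \to E \to 0.
\]
On every minimal prime of $\R^I$ this inclusion becomes an equality, because on those localizations the two filtrations agree generically. Hence every associated prime of $E$ is non-minimal in $\R^I$; the unmixedness of $G_I(A)$ then forces every such associated prime to contract to a minimal prime of $G_I(A)$, giving $\dim_{\R^I}(E) = d$ or $E=0$.

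For (2), the colon construction defining $r(I^n,M)$ and the one defining $\wt{I^n}$ both commute with localization at $\q$, since localization is flat. Therefore
\[
E_\q \;=\; \bigoplus_{n\ge 1} r(I^n_\q, M_\q)\big/\wt{I^n_\q}.
\]
Unmixedness of $G_I(A)$ descends to unmixedness of $G_{I_\q}(A_\q)$, and because $\q\supseteq I$ with $I$ regular, $A_\q$ is a Cohen–Macaulay local ring of dimension $\htt(\q)\ge 1$. Applying part (1) (or equivalently its local incarnation Theorem~\ref{first}(b)) to this local setting yields $\dim E_\q = \htt(\q)$ as soon as $E_\q\ne 0$. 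That $E\ne 0$ actually forces $E_\q\ne 0$ for every $\q\supseteq I$ should follow from the equidimensionality produced by (1): an equidimensional $\R^I$-module of dimension $d$ cannot become zero upon localization at a prime that meets every minimal component of $G_I(A)$.

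For (3), if $M_\q$ is free of rank $r$ then for every $a\in A_\q$ and every $k\ge 0$
\[
a\cdot I^k_\q M_\q \subseteq I^{n+k+1}_\q M_\q \;\Longleftrightarrow\; a\cdot I^k_\q \subseteq I^{n+k+1}_\q,
\]
so $r(I^n_\q, M_\q) = \wt{I^n_\q}$ and hence $E_\q = 0$. If $E$ were nonzero, part (2) would give $\dim E_\q = \htt(\q)\ge 1$, contradicting $E_\q=0$. Therefore $E=0$, i.e.\ $r(I^n,M) = \wt{I^n}$ for all $n\ge 1$. The main obstacle I anticipate is part (1): one must pin down $\operatorname{Ass}_{\R^I}(E)$ carefully enough to bring the unmixedness of $G_I(A)$ to bear, in particular verifying the generic matching between $\wt{\R}^I$ and $\R(\F^I_M)$ at every minimal prime of $\R^I$. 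Once that analysis is in place, (2) and (3) are essentially formal consequences of flat base change and the free-module reduction.
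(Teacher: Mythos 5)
Your outline reproduces the paper's strategy only for part (3); for (1) and (2) there are genuine gaps. In (1), the step you yourself flag as the main obstacle is indeed not closed: from the vanishing of $E$ at the minimal primes of $\R^I$ you only get $\dim_{\R^I}E\le d$, and your assertion that unmixedness of $G_I(A)$ ``forces every associated prime of $E$ to contract to a minimal prime of $G_I(A)$'' is a non sequitur as written. The missing bridge — which is exactly what the paper imports from \cite{samunmixed} — is the graded embedding $E\hookrightarrow\bigoplus_{n}A/\wt{I^n}$ combined with $\Ass_{\R^I}\bigl(\bigoplus_n A/\wt{I^n}\bigr)=\Ass_{\R^I}(\wt{G}_I(A))$ (Lemma 3.3 there), together with the observation that unmixedness of $G_I(A)$, with $\htt(I)\ge 1$, gives $\grade(G_I(A)_+,G_I(A))>0$, hence $\wt{I^n}=I^n$ and $\wt{G}_I(A)=G_I(A)$; only then does unmixedness force every associated prime of $E$ to have dimension $d$, yielding the dichotomy.

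Part (2) is where your route diverges most from the paper and where it would fail as written. The paper does not ``localize and reapply (1)'': it works over the extended Rees algebra $\bigoplus_{n\in\Z}\wt{I^n}t^n$ (resp.\ $\bigoplus_{n\in\Z}I^nt^n$), takes a primary decomposition of $(t^{-n})$ for $n\gg 0$, produces a height-one prime $\p_1\in\operatorname{Supp}(E)$, proves $(\p_1)_{\q}\neq 0$ using $r(I^n,M)\subseteq\overline{I^n}$ and $\overline{I}=I$ (the radicality of $I$), and concludes by a height count. In your version two essential steps are only asserted: (i) that $E\neq 0$ forces $E_\q\neq 0$ for \emph{every} prime $\q\supseteq I$ — ``should follow from equidimensionality'' is not an argument, nothing you wrote excludes that $\operatorname{Supp}(E)$ misses $\q$, and this is precisely where the paper's radical hypothesis enters (so that $I^n$, $\wt{I^n}$ and $r(I^n,M)$ all have the same minimal primes); notably, radicality of $I$ is never used anywhere in your proposal, a red flag since the paper needs it twice; (ii) the descent of unmixedness to $G_{I_\q}(A_\q)$ is stated without proof (it can be salvaged: absence of embedded primes passes to localization, and equidimensionality of $G_{I_\q}(A_\q)$ follows from Proposition \ref{UE}(1) applied to the Cohen--Macaulay local ring $A_\q$, but you must say this), and the statement asks for the dimension over $\widehat{\R^{I_\q}}$ rather than $\R^{I_\q}$, which at least requires the remark that $t^{-n}$ annihilates $E_\q$ for $n\gg 0$ so the two dimensions coincide. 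Part (3), the free-module computation giving $E_\q=0$ and the contradiction with (2), is exactly the paper's argument and is fine once (2) is actually established.
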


We now outline the structure and main results of the paper. In Section~2, we present the necessary preliminaries. In Section~3, we first prove that $r(I,-)$ is an invariant on canonical modules over Cohen-Macaulay rings, and then we sketch the proofs of Theorem~\ref{1} and Theorem~\ref{first}. In Section~4, we provide an example illustrating the necessity of the unmixedness condition on $G_I(A)$ in Theorem~\ref{first}. In Section~5, we prove Theorem~\ref{2} and Theorem~\ref{3}. Finally, in Section~6, we prove Theorem \ref{last}.

\section{Preliminaries}
Throughout this paper, all rings considered are Noetherian, and all modules considered, unless stated otherwise, are finitely generated. We will use \cite{Bruns} as a general reference.
Also, $\ell(M)$ denotes the length of an $A$-module.
\begin{point}\label{sup}
 \normalfont
 Let $(A, \m)$ be a local ring.
   An element $x\in  I$ is called $A$-superficial with respect to $I$ if there exists $c\in \N$ such that for all $n \geq c$, $$(I^{n+1}:x)\cap I^c = I^n.$$ If depth$(I, A)>0$, then one can show that an $A$-superficial element is $A$-regular. Furthermore, in this case $$(I^{n+1}:x) = I^n \text{ for all } n\gg0.$$ Superficial elements exist if the residue field is infinite, see \cite[Page 86]{hccmm}. 
\end{point}

\begin{point}\label{Ifiltration}
\normalfont
 A filtration of ideals $\F=\{I_n\}_{n\geq 0}$ is said to be $I$-filtration if it satisfies the following conditions:
 \begin{enumerate}
     \item $I_0=A$ and $I_{n+1}\subseteq I_n$ for all $n\geq 0.$
     \item $I\subseteq I_1$ and $I_1\neq A.$
     \item $I_nI_m\subseteq I_{n+m}$ for all $n,m\geq 0$.
 \end{enumerate}

\end{point}
 In addition, if $II_n=I_{n+1}$ for all $n\gg0$ then the $I$-filtration $\F$ is called as an $I$-stable filtration. It is well known that if $\F$ is an $I$-stable filtration, then  the Rees algebra $\R(\F)=\bigoplus_{n\geq 0}I_nt^n$ is a finite module over $\R=A[It]=\bigoplus_{n\geq 0}I^nt^n$.
\begin{point}
\normalfont
 Let $(A,\m)$ be a Noetherian local ring. Let $\F = \{I_n\}_{n\geq 0}$ be an $I$-stable filtration of $A$. Set ${L}^{\F}=\bigoplus_{n\geq 0}A/{I_{n+1}}$. Let ${\R}(\F)=\bigoplus_{n\geq 0}{I_n}t^n$ be the Rees ring with respect to the filtration $\{{I_n}\}_{n\geq 0}$. As ${\R}(\F)$ is a subring of $A[t]$, so $A[t]$ is a ${\R}(\F)$-module. Note that we have an exact sequence of $\R(\F)$-modules $$0\to {\R}(\F)\to A[t]\to {L}^{\F}(-1)\to 0 .$$ It follows that  ${L}^{\F}(-1)$ is a $\R(\F)$-module. Hence   ${L}^{\F}$ is a $\R$-module, for more details see (\cite{Part1}, 4.2).
 Note $L^\F$ is \emph{not} a finitely generated $\R(\F)$-module.
\end{point}

\begin{point}\label{rit}
\normalfont
Let $A$ be a ring and $I$ an ideal of $A$. Let $M$ be an $R$-module. Consider the following ascending chain of ideals in $A$ $$I\subseteq (IM:M)\subseteq (I^2M:IM)\subseteq (I^3M:I^2M)\subseteq \cdots \subseteq(I^{n+1}M:I^nM) \cdots .$$ Since $R$ is Noetherian, this chain stabilizes. The stable value is denoted as $r(I, M)$. It is easy to prove that the filtration of ideal $\mathcal{F^I_M}=\{r(I^n, M)\}_{n\geq 0}$ forms an $I$-filtration, see (\cite{Zulfeqarr}, Theorem 2.1). Define $$s(r(I^t,M))=\text{min}\{ n\, |\, r{(I^t,M)}=(I^{n+t}M:_AI^nM)\} \text{ for all $t\geq 1$}.$$
\end{point}

\begin{point}\label{rho}
\normalfont
Consider the following chain of submodules of M:$$IM\subseteq (I^2M:_MI)\subseteq (I^3M:_MI^2)\subseteq \cdots \subseteq (I^{n+1}M:_MI^n) \cdots .$$ Since M is Noetherian, this chain of submodules stabilizes. The stable value is denoted by $\widetilde{IM}$. Note that the filtration $\{\widetilde{I^nM}\}_{n\geq 1}$ is an $I$-filtration. Set $$\rho(I^t,M)=\text{min}\{ n\, |\, \wt{I^tM}=(I^{n+t}M:_MI^n)\} \text{ for all $t\geq 1$}.$$
\end{point}

\begin{remark}
\label{rmk:tildeI^nM}
\normalfont
\begin{enumerate}
    \item If grade$(I, M)>0$, then for all $n\gg0$,  $I^nM=\widetilde{I^nM}.$ In particular, $\{\widetilde{I^nM}\}_{n\geq 1}$ is an $I$-stable filtration (see \cite[2.2]{tildeI^nM}). Set $$s^*(I,M)=\text{min}\{ n\, |\, \wt{I^mM}=I^mM \text{ for all }m\geq n\}.$$
    \item If grade$(G_I(A)_+, G_I(M))>0$ then $I^nM=\widetilde{I^nM}$ for all $n\geq 1$, see \cite[fact 9]{Heinzer}.
\end{enumerate}
\end{remark}

\begin{lemma}[\cite{Zulfeqarr}, Theorem 5.9]
  \normalfont
  \label{Thm:Istable}
      Let $I$ be a regular ideal of $A.$ If ann$(M)=0$ then $\mathcal{F^I_M}=\{r(I^n, M)\}_{n\geq 0}$ is an $I$-stable filtration.
  \end{lemma}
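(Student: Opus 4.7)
My plan is to prove $I$-stability of $\F^I_M$ by establishing, for some $c\ge 0$, the identity $r(I^n,M)=I^{n-c}\,r(I^c,M)$ for all $n\gg 0$. The strategy is to translate the colon ideals $r(I^n,M)$ into intersections of module-theoretic Ratliff--Rush closures of submodules of $M^s$ with the subring $A$, then kill the $I$-torsion of $M^s$ to gain positive grade on $I$, and finally conclude by Artin--Rees.

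First I would pick generators $m_1,\dots,m_s$ of $M$ and consider the map $\phi\colon A\to M^s$, $a\mapsto(am_1,\dots,am_s)$; it is injective since $\ann(M)=0$. Unpacking the chain in \ref{rit}, one checks that for every $a\in A$,
\[
a\in r(I^n,M)\iff aI^kM\subseteq I^{n+k}M\text{ for some }k\iff \phi(a)\in \wt{I^nM^s},
\]
where $\wt{I^nM^s}$ is the module-theoretic Ratliff--Rush closure from \ref{rho}. Identifying $A$ with $\phi(A)\subseteq M^s$, this yields $r(I^n,M)=\wt{I^nM^s}\cap A$.

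Next, let $Z=\bigcup_{k\ge 0}(0:_{M^s}I^k)$ be the $I$-torsion submodule of $M^s$; it is finitely generated, so $I^{k_0}Z=0$ for some $k_0$. Because $I$ contains a non-zero-divisor on $A$ and $\phi$ is injective, $A\cap Z=0$, so $\phi$ descends to an injection $A\hookrightarrow N:=M^s/Z$. The quotient $N$ is $I$-torsion-free, hence $\grade(I,N)\ge 1$, and Remark~\ref{rmk:tildeI^nM}(1) gives $\wt{I^nN}=I^nN$ for all $n\gg 0$. A direct check using $I^{k_0}Z=0$ shows $\wt{I^nM^s}=\pi^{-1}(\wt{I^nN})$, where $\pi\colon M^s\twoheadrightarrow N$; combining yields $\wt{I^nM^s}=I^nM^s+Z$ for $n\gg 0$. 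Intersecting with $A$ and using $A\cap Z=0$ gives $r(I^n,M)=I^nN\cap A$ for $n\gg 0$.

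Finally, Artin--Rees applied to the submodule $A\hookrightarrow N$ of the finitely generated $A$-module $N$ produces a constant $c\ge 0$ with $I^nN\cap A=I^{n-c}(I^cN\cap A)$ for all $n\ge c$. Translating back gives $r(I^n,M)=I^{n-c}\,r(I^c,M)$ for $n\gg 0$, whence $I\cdot r(I^n,M)=r(I^{n+1},M)$ eventually, i.e.\ $\F^I_M$ is $I$-stable. The main obstacle I expect is the reduction modulo $Z$: Remark~\ref{rmk:tildeI^nM}(1) does not apply directly to $M^s$, since $\grade(I,M^s)$ can be $0$ even when $\ann(M)=0$, and one must pass to the $I$-torsion-free quotient $N$ before the Ratliff--Rush closure becomes eventually $I$-adic.
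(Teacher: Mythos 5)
Your proof is correct, and the comparison here is necessarily with the cited source rather than with the paper itself: the paper quotes this lemma from \cite{Zulfeqarr}, Theorem 5.9, without giving any argument, so your write-up is a genuinely self-contained substitute. The key identification $r(I^n,M)=\wt{I^nM^s}\cap A$ via $a\mapsto(am_1,\dots,am_s)$ is exactly right, since membership on either side amounts to $aI^kM\subseteq I^{n+k}M$ for some $k$; the passage to $N=M^s/Z$ correctly repairs the possible failure of $\grade(I,M^s)>0$ (your inclusion $\pi^{-1}(\wt{I^nN})\subseteq\wt{I^nM^s}$ does need $I^{k_0}Z=0$, as you indicate), the vanishing $A\cap Z=0$ follows from $\ann(M)=0$ together with the nonzerodivisor in $I$, torsion-freeness of $N$ plus prime avoidance gives $\grade(I,N)\ge 1$, and then Remark~\ref{rmk:tildeI^nM}(1) and Artin--Rees finish the job. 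Two small touch-ups: in the final translation you should either enlarge the Artin--Rees constant $c$ so that it also exceeds the threshold beyond which $r(I^n,M)=I^nN\cap A$ (the Artin--Rees identity persists when $c$ is increased), or argue directly that $I\cdot(I^nN\cap A)=I^{n+1-c}(I^cN\cap A)=I^{n+1}N\cap A$ for all large $n$, which is all that stability requires; and your argument only addresses the stability $I\,r(I^n,M)=r(I^{n+1},M)$ for $n\gg0$, the $I$-filtration axioms themselves being the part already recorded in \ref{rit} via \cite{Zulfeqarr}, Theorem 2.1, so it is worth saying explicitly that this is the only part at stake.
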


 Let $\R^I=A[It]$ be the Rees ring. By Remark \ref{rmk:tildeI^nM} and Lemma \ref{Thm:Istable}, if $I$ is regular and ann$(M)=0$ then $\R({\F^I_M})=\bigoplus_{n\geq 0}r(I^n,M)t^n$ and $\wt{\R}^I =\bigoplus_{n\geq 0}\wt{I^n}t^n$ are finitely generated $\R$-module. Hence the quotient module $\dfrac{\R({\F^I_M})}{\wt{\R}^I}=\bigoplus_{n\geq 0}\dfrac{r(I^n,M)}{\wt{I^n}}$ is a finitely generated $\R^I$-module.

 \begin{definition}\cite[Theorem 1]{kishor}
\normalfont
    Let $(A,\m)$ be a quasi-unmixed local ring of dimension $d\geq 1$ with infinite residue field. Let $I$ an $\m$-primary ideal of $A.$ Then there exist unique largest ideals $I_{(k)}$ for $1\leq k\leq d$ containing $I$ such that 
    \begin{enumerate}
        \item $e_i(I)=e_i(I_{(k)})$ for $0\leq i\leq k$, where $e_i(I)$ is the $i$-th Hilbert coefficient of $G_I(A)$ (see \cite{hccmm}, for the definition of Hilbert coefficient).
        \item $I\subseteq I_{(d)}\subseteq \ldots \subseteq I_{(1)}\subseteq \overline{I}=\text{ integral closure of }I.$
    \end{enumerate}
    The ideal $I_{(k)}$ is called as $k$-th coefficient ideal of $A.$
\end{definition}

 We will use the following result to prove our main theorem. For the proof of the result, see (\cite{samunmixed}, Proposition 2.9).  
\begin{proposition}
\label{UE}
     Let $(A,\m)$ be a Noetherian ring of dimension $d$ and $I$ an ideal of $A.$
    \begin{enumerate}[\rm (1)]
     \item  If $A$ is equidimensional and universal catenary then $G_{I}(A)$ and $\wt{G}_I(A)$ are equidimensional.
        \item Let $(A,\m)$ be a local ring. If $A$ is quasi unmixed with infinite residue field  and $I$ is $\m$-primary then  $G_{I}(A)$ is unmixed iff $I^n=I^n_{(1)}$ for every $n$, where $I^n_{(1)}$ is the first coefficient ideal of $I^n$.
    \end{enumerate}
\end{proposition}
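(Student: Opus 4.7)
The plan for (1) is to pass to the extended Rees algebras
\[ B = A[It, t^{-1}] = \bigoplus_{n\in\Z} I^n t^n \quad\text{and}\quad \wt{B} = \bigoplus_{n\in\Z} \wt{I^n}\, t^n \]
(with the convention $I^n = \wt{I^n} = A$ for $n \leq 0$), and exploit the presentations $G_I(A) \cong B/t^{-1}B$ and $\wt{G}_I(A) \cong \wt{B}/t^{-1}\wt{B}$, in which $t^{-1}$ is a nonzerodivisor. I first argue that $B$ is equidimensional of dimension $d+1$: a $\Z$-graded analysis identifies its minimal primes as $\p B$ for $\p \in \operatorname{Min}(A)$, and universal catenarity of $A$ gives $\dim B/\p B = \dim A/\p + 1 = d+1$ uniformly. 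Since $t^{-1}$ is a nonzerodivisor in the universal catenary ring $B$, cutting by it preserves equidimensionality (principal ideal theorem combined with catenarity), yielding equidimensionality of $G_I(A)$ of dimension $d$. For $\wt{G}_I(A)$, the crucial observation is that $\wt{I^n} = I^n$ for $n \gg 0$, making $\wt{B}$ a finite birational extension of $B$; lying-over and going-up for integral extensions transfer equidimensionality from $B$ to $\wt{B}$, and a second application of the PIT argument yields equidimensionality of $\wt{G}_I(A)$.

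For (2), I invoke the characterization of the first coefficient ideal through Serre's condition on the associated graded ring. Under the quasi-unmixed and infinite residue field hypotheses, for $I$ an $\m$-primary ideal of $A$, the condition $(I^n)_{(1)} = I^n$ for every $n \geq 1$ is equivalent to $G_I(A)$ satisfying $(S_1)$, i.e., having no embedded associated primes (this refines the classical fact that for $d=1$, $I_{(1)}$ coincides with the Ratliff--Rush closure $\wt{I}$, and $I^n = \wt{I^n}$ for all $n$ iff $G_I(A)$ has positive depth at the irrelevant ideal). Granting this characterization, the biconditional follows at once by combining with part (1): equidimensionality plus $(S_1)$ is exactly unmixedness, so $(I^n)_{(1)} = I^n$ for all $n$ $\iff$ $G_I(A)$ is $(S_1)$ $\iff$ $G_I(A)$ is unmixed.

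The main obstacle is establishing the $(S_1)$-characterization of the equalities $(I^n)_{(1)} = I^n$ itself. The approach is to choose a generic superficial sequence $x_1,\ldots,x_{d-1}$ (whose existence requires the infinite residue field) and reduce modulo it to a one-dimensional quotient, where the first coefficient ideal specializes to the Ratliff--Rush closure and the condition becomes the standard positive-depth condition on the associated graded ring; one then lifts back using the compatibility of coefficient ideals with generic superficial reductions. The quasi-unmixed hypothesis is what guarantees that such a generic superficial sequence behaves well under passage to $\widehat A$ (where the equidimensionality needed for part (1) applies cleanly) and that the Hilbert coefficients $e_0, e_1$ faithfully detect the embedded primes of $G_I(A)$ that obstruct $(S_1)$.
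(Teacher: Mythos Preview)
The paper does not actually prove this proposition: it simply cites \cite{samunmixed}, Proposition~2.9, and remarks that for part~(1) the non-local case is handled by \cite{Bruns}, Lemma~4.5.5, with the same argument extending to $\wt{G}_I(A)$. So there is no in-paper proof to compare your attempt against; you are effectively reconstructing the cited arguments.

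For part~(1), your extended-Rees-algebra approach is the standard one and is essentially what the cited references do: pass to $B=A[It,t^{-1}]$, identify the minimal primes of $B$ with those of $A$ via the $\Z$-grading, use universal catenarity to get equidimensionality of $B$, then cut by the nonzerodivisor $t^{-1}$ and invoke Krull's principal ideal theorem plus catenarity. One point to tighten: you assert $\wt{I^n}=I^n$ for $n\gg 0$ to make $\wt B$ a finite extension of $B$, but this requires $\operatorname{grade}(I,A)>0$ (Remark~\ref{rmk:tildeI^nM}(1)), which is not among the stated hypotheses of part~(1); the paper's remark that ``the same proof will work'' for $\wt G_I(A)$ is presumably made under that implicit standing assumption.

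For part~(2), your plan is to identify ``$I^n=(I^n)_{(1)}$ for all $n$'' with the condition that $G_I(A)$ satisfies Serre's $(S_1)$, and then combine with part~(1). That equivalence is exactly the substantive content of the cited result in \cite{samunmixed}; your sketch (reduce via a generic superficial sequence to dimension one, where the first coefficient ideal coincides with the Ratliff--Rush closure and positive depth of the associated graded ring characterizes the equalities, then lift back) is the right shape, but as written it remains a sketch: you have not verified that coefficient ideals and the $(S_1)$ property both specialize and lift correctly along a superficial element, nor that the quasi-unmixed hypothesis suffices for the needed dimension bookkeeping at each step. In short, your outline is correct in spirit and matches what the external reference proves, but the hard step you flag is precisely the one the paper outsources rather than proves.
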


Note that for the first part, we don't need $A$ to be local (see \cite{Bruns}, Lemma 4.5.5). The same proof will work for equidimensional property of $\wt{G}_I(A).$

\begin{point}
    \normalfont
   Let $(A,\m)$ be a local ring and $I$ an ideal of $A.$ A reduction of $I$ is an ideal $K \subseteq I$ such that $KI^n =I^{n+1}$ for some $n \in N$. If $K$ is a reduction of $I$, the reduction number of $I$ with respect to $K$ is defined as: $$\text{r}_K(I) =\text{min}\{n\, |\, KI^n = I^{n+1}\}.$$ A reduction is minimal if it is minimal with respect to inclusion. The reduction number of $I$ is defined as $\text{r}(I) = \text{min}\{\text{r}_K(I) \, |\, K \text{ is a minimal reduction of }  I\}$. 
\end{point}

\section{Invariant of canonical modules over Cohen-Macaulay rings}
In this section, we introduce the notion of a quasi-Hilbert ring and provide a characterization of such rings over a semi-local Cohen–Macaulay domain.

Let $A$ be a Cohen-Macaulay ring that admits a canonical $A$-module. It is well known that a canonical module of a Cohen-Macaulay ring is unique up to a tensor product with a locally free module of rank one (see \cite{Bruns}, Remark 3.3.17).

\begin{proposition}\label{inv}
    Let $A$ be a Cohen-Macaulay ring and $I$ an ideal of $A$. Then the ideal $r(I,-)$ is an invariant on canonical $A$-modules.
\end{proposition}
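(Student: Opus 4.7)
The plan is to reduce the statement to the basic observation that two canonical modules differ locally by a free rank-one twist, and then to show that the formation of $r(I,-)$ is unaffected by such a twist. More precisely, by \cite[Remark 3.3.17]{Bruns} any two canonical modules $\omega_A$ and $\omega'_A$ of the Cohen-Macaulay ring $A$ satisfy $\omega'_A \cong \omega_A \otimes_A L$ for some rank-one locally free (i.e.\ invertible) $A$-module $L$. So the proposition follows once we establish the following general fact: for every finitely generated $A$-module $M$ and every invertible $A$-module $L$,
$$r(I, M) \;=\; r(I, M \otimes_A L).$$

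To prove this, I first reduce to a colon computation at a single sufficiently large $n$. Since $A$ is Noetherian and the modules involved are finitely generated, the ascending chains defining $r(I,M)$ and $r(I, M\otimes_A L)$ (see \ref{rit}) both stabilize. Picking $n$ large enough that both chains have stabilized, it suffices to show
$$(I^{n+1}M :_A I^n M) \;=\; \bigl(I^{n+1}(M\otimes_A L) :_A I^n (M\otimes_A L)\bigr).$$
Both sides are ideals of the Noetherian ring $A$, so equality can be tested after localization at every prime $\mathfrak{p}\in\Spec A$. Colon ideals of finitely generated modules commute with localization, and because $L$ is flat we also have $I^k(M\otimes_A L)_\mathfrak{p} = I^k M_\mathfrak{p} \otimes_{A_\mathfrak{p}} L_\mathfrak{p}$ for every $k$.

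The final and only non-formal ingredient is that $L_\mathfrak{p} \cong A_\mathfrak{p}$ for every prime $\mathfrak{p}$, which holds because $L$ is locally free of rank one. Consequently $M_\mathfrak{p}\otimes_{A_\mathfrak{p}} L_\mathfrak{p}\cong M_\mathfrak{p}$ as $A_\mathfrak{p}$-modules, and the two colon ideals agree after localization at every $\mathfrak{p}$. This forces equality globally, proving $r(I,M) = r(I, M\otimes_A L)$, and hence $r(I,\omega_A) = r(I,\omega'_A)$ as desired.

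The argument is essentially formal once the structural input (uniqueness of $\omega_A$ up to invertible twist) is in place; the only step that needs a little care is the interchange of the colon construction with localization, but this is standard because both $M$ and $I^n M$ are finitely generated over the Noetherian ring $A$. Thus I do not anticipate a serious obstacle; the proof reduces cleanly to the local case where $L$ becomes trivial.
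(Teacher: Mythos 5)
Your proof is correct, and it shares with the paper only the structural starting point (any two canonical modules differ by tensoring with a rank-one locally free module, \cite[Remark 3.3.17]{Bruns}); the mechanism by which you handle the twist is genuinely different. The paper invokes two cited facts about $r(I,-)$ from \cite{Zulfeqarr}: Proposition 1.4(c), giving $r(I,\omega^2_A\otimes M)\supseteq r(I,\omega^2_A)+r(I,M)$, and Corollary 1.6, giving $r(I,M)\supseteq \wt{I}$, so that $r(I,\omega^1_A)\supseteq r(I,\omega^2_A)$, with the reverse inclusion by symmetry. You instead prove directly that an invertible twist does not change $r(I,-)$: fix $n$ past the stabilization point of both ascending chains, note that $(I^{n+1}M:_A I^nM)=\ann_A(I^nM/I^{n+1}M)$ is the annihilator of a finitely generated module and hence commutes with localization, and then use $L_{\mathfrak p}\cong A_{\mathfrak p}$ to identify the two colon ideals locally at every prime. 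This is sound (the only points needing care --- that the chains stabilize, that $I^k(M\otimes_A L)$ localizes to $I^kM_{\mathfrak p}\otimes_{A_{\mathfrak p}}L_{\mathfrak p}$, and that ideals of $A$ agree iff they agree at all primes --- are all standard and you address them), and it buys something: it is self-contained, avoiding the dependence on \cite{Zulfeqarr}, and it proves the stronger statement $r(I,M)=r(I,M\otimes_A L)$ for \emph{every} finitely generated $M$ and invertible $L$, not just for canonical modules. The paper's route is shorter given the cited toolkit and stays entirely at the level of ideal containments, but yours is arguably the more transparent and more general argument.
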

\begin{proof}
    Let $w^1_A$ and $w^2_A$ be any two canonical $A$-modules. Since the canonical module of $A$ is unique up to a tensor product with a locally free module of rank one, we have $w^1_A\cong w^2_A\otimes M$, where $M$ is a locally free $A$-module of rank one. By (\cite{Zulfeqarr}, Proposition 1.4(c)), it follows that $$r(I,w^1_A)\cong r(I,w^2_A\otimes M)\supseteq r(I,w^2_A)+r(I,M).$$ By Corollary 1.6 of \cite{Zulfeqarr}, we have $r(I,w^1_A)\supseteq r(I,w^2_A)+\wt{I}=r(I,w^2_A).$ Similarly, we can prove the other containment by using $w^2_A\cong w^1_A\otimes N$ for some locally free $A$-module of rank one.
\end{proof}
Recall that a ring $A$ is said to be Hilbert if the Jacobson radical ideal is nilpotent. From now on, we denote $J$ as the Jacobson radical ideal of $A.$
\begin{theorem}\label{Hilb}
     Let $A$ be a Cohen-Macaulay ring and $\omega_A$ be a canonical $A$-module. Then TFAE
     \begin{enumerate}
         \item $A$ is a Hilbert ring.
         \item $r(J,\omega_A)=A$.
         \item $r(J^n,\omega_A)=A$ for all $n\geq 1.$
     \end{enumerate}
\end{theorem}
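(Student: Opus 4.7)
My plan is the short cycle $(1) \Rightarrow (3) \Rightarrow (2) \Rightarrow (1)$, with $(3) \Rightarrow (2)$ being the trivial specialization to $n = 1$. By Proposition~\ref{inv}, $r(J^n,\omega_A)$ is independent of the chosen canonical module, so I fix one $\omega_A$ throughout. For $(1) \Rightarrow (3)$: if $J^N = 0$ then for every $n \ge 1$ and every $m \ge N$ we have $J^m \omega_A = 0 = J^{m+n}\omega_A$, so $(J^{m+n}\omega_A :_A J^m\omega_A) = A$; since this colon ideal stabilizes to $r(J^n,\omega_A)$ for large $m$ (cf.~\ref{rit}), we obtain $r(J^n,\omega_A) = A$.

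The substantive direction is $(2) \Rightarrow (1)$. The hypothesis gives some $k$ with $(J^{k+1}\omega_A :_A J^k\omega_A) = A$, that is, $J\cdot(J^k\omega_A) = J^{k+1}\omega_A = J^k \omega_A$. The submodule $J^k\omega_A$ is finitely generated, and $J$ is by definition the Jacobson radical of $A$, so Nakayama's lemma forces $J^k\omega_A = 0$. It then remains to upgrade this to $J^k = 0$, for which I need $\ann_A(\omega_A) = 0$; once faithfulness of $\omega_A$ is in hand, $J^k \subseteq \ann_A(\omega_A) = 0$ is immediate, and $A$ is Hilbert.

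The main obstacle is thus the faithfulness of $\omega_A$ in the (possibly) non-local CM setting, where the canonical module is only defined up to tensoring by a rank-one locally free module. In the local case this is standard: the canonical identification $\Hom_A(\omega_A,\omega_A) \cong A$ for a CM local ring shows that any $a \in \ann_A\omega_A$ gives the zero endomorphism and hence is zero in $A$. I would then reduce the global statement to the local one prime by prime: for each $\q \in \Spec(A)$, $(\omega_A)_\q$ is a canonical module for the CM local ring $A_\q$ (the rank-one twist is faithful and remains so under localization), hence $(\ann_A\omega_A)_\q \subseteq \ann_{A_\q}((\omega_A)_\q) = 0$ for every prime $\q$, which forces $\ann_A\omega_A = 0$ globally and closes the argument.
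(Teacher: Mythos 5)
Your proof is correct and follows essentially the same route as the paper's: $(1)\Rightarrow(3)$ by nilpotence of $J$ together with stabilization of the colon ideals, and $(2)\Rightarrow(1)$ by forcing $J^{k+1}\omega_A=J^k\omega_A$, applying Nakayama (the paper uses the determinant-trick form, producing $\alpha\in J$ with $(1+\alpha)J^k\omega_A=0$), and then invoking faithfulness of $\omega_A$. Your closing paragraph merely spells out, via localization, the faithfulness that the paper cites directly from Bruns--Herzog, Theorem 3.3.13, so there is no substantive difference.
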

\begin{proof}
    $(1)\implies(2),(3).$ Suppose $A$ is a Hilbert ring. Then $J^n=0$ for some $n\geq 1.$ The result follows  since $r(J^i,\omega_A)=(J^{i+n}\omega_A:J^{n}\omega_A)$ for all $n\gg0$ and $i\geq 1.$ 

    $(2),(3)\implies(1).$ It suffices to prove $(2)\implies (1)$. Suppose $r(J,\omega_A)=A$. Let $r(J,\omega_A)=(J^{k+1}\omega_A:J^{k}\omega_A)$ for some $k\geq 1.$ By the assumption, we have $J^{k+1}\omega_A=J^{k}\omega_A$. This implies that there exists $\alpha\in J$ such that $(1+\alpha)J^k\omega_A=0$. Thus, $J^{k}\omega_A=0.$ As $\omega_A$ is a faithful $A$-module (see \cite{Bruns}, Theorem 3.3.13), we have $J^{k}=0.$ This completes the proof. 
\end{proof}

 In general, we know that $I\subseteq r(I,\omega_A)$ for any ideal $I$. But the next result says that the Jacobson radical ideal of a Cohen-Macaulay semi-local ring is Ratliff-Rush closed with respect to the canonical module.

 \begin{proposition}
     Let $A$ be a semi-local Cohen-Macaulay ring and $\omega_A$ be a canonical $A$-module. Then $r(J,\omega_A)=J$.
 \end{proposition}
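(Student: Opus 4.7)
The inclusion $J \subseteq r(J,\omega_A)$ is immediate from the definition of the defining chain of colons. For the reverse, my plan is to exploit the semi-local hypothesis: since $J = \bigcap_{\m} \m$ as $\m$ ranges over the finitely many maximal ideals of $A$, it suffices to prove $r(J,\omega_A)\subseteq \m$ for each maximal ideal $\m$, which I would do by localizing at $\m$.

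Fix a maximal ideal $\m$ of $A$. Because $\omega_A$ is finitely generated, colon ideals commute with localization, so
\[
r(J,\omega_A)_\m \;=\; r\bigl(JA_\m,\,(\omega_A)_\m\bigr).
\]
Since $A$ is semi-local, every other maximal ideal becomes a unit ideal upon inverting the complement of $\m$, hence $JA_\m = \m A_\m$. Moreover, $(\omega_A)_\m$ is a canonical module of the Cohen-Macaulay local ring $A_\m$, which under the standing dimension hypothesis $\dim A\ge 1$ we may take to satisfy $\dim A_\m\ge 1$.

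The crux is then the purely local claim: for each $k\ge 0$,
\[
(\m^{k+1}(\omega_A)_\m :_{A_\m} \m^k(\omega_A)_\m) \;\subseteq\; \m A_\m.
\]
Suppose otherwise that some $u$ in this colon is a unit of $A_\m$. Then multiplication by $u$ is an automorphism, so $u\cdot \m^k(\omega_A)_\m \subseteq \m^{k+1}(\omega_A)_\m$ forces $\m^k(\omega_A)_\m = \m^{k+1}(\omega_A)_\m$; Nakayama then gives $\m^k(\omega_A)_\m = 0$. Because a canonical module is faithful (the same input used in the proof of Theorem \ref{Hilb}), this yields $(\m A_\m)^k = 0$, contradicting $\dim A_\m\ge 1$.

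Combining these steps: each element of $r(J,\omega_A)$ maps into $\m A_\m$, hence lies in $\m$ (as $\m$ is prime), and intersecting over all maximal $\m$ yields $r(J,\omega_A)\subseteq J$. There is no serious obstacle once the localization step is set up correctly; the semi-local hypothesis is used precisely to collapse $J$ to the maximal ideal in each localization, and the faithfulness + Nakayama argument that powered Theorem \ref{Hilb} is reused verbatim in the local step.
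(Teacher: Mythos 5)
Your argument is correct and shares the paper's overall strategy: both proofs localize at each maximal ideal $\m$, use that $JA_\m=\m A_\m$ and that $(\omega_A)_\m$ is a canonical module of $A_\m$, and then verify the equality locally. The difference is in the local step. The paper quotes the containment $r(\m A_\m,\omega_{A_\m})\subseteq \overline{\m A_\m}$ (an integral-closure bound from \cite{Zulfeqarr}) together with the fact that a maximal ideal of a local ring is integrally closed, whereas you argue directly: if some colon $(\m^{k+1}(\omega_A)_\m :\m^{k}(\omega_A)_\m)$ contained a unit, then $\m^{k}(\omega_A)_\m=\m^{k+1}(\omega_A)_\m$, Nakayama gives $\m^{k}(\omega_A)_\m=0$, and faithfulness of the canonical module forces $(\m A_\m)^k=0$ --- the same mechanism that drives Theorem \ref{Hilb}, now run locally. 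Your route is more self-contained (no integral-closure input is needed), while the paper's is shorter given the citation and records the slightly stronger intermediate fact that $r(J,\omega_A)$ localizes into the integral closure of each $\m A_\m$.

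One point deserves care: your contradiction requires $\dim A_\m\ge 1$ at the particular maximal ideal $\m$, and this does not follow from $\dim A\ge 1$ alone in a semi-local ring, since a height-zero maximal ideal can coexist with $\dim A\ge 1$. This is not really a defect of your proof: if some $A_\m$ is Artinian the proposition itself fails (there $(\m A_\m)^k=0$ for some $k$, so $r(J,\omega_A)_\m=A_\m\ne J_\m$ exactly as in Theorem \ref{Hilb}), and the paper's own step $r(\m A_\m,\omega_{A_\m})\subseteq\overline{\m A_\m}$ silently needs the same positivity, since it relies on $\m^{k}(\omega_A)_\m$ being a faithful module. So both arguments implicitly read the statement in the paper's standing setting where every maximal ideal has positive height; you should state that hypothesis explicitly rather than derive it from $\dim A\ge 1$.
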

 \begin{proof}
     Let $\m_1,\ldots,\m_n$ be the maximal ideals of $A$. Therefore $J=\m_1\cap\ldots \cap \m_n.$ For $1\leq i\leq n$,

     \begin{equation*}
         \begin{split}
           J_{\m_i}=\m_iA_{\m_i} \subseteq & r(J,\omega_A)_{\m_i}\\ 
           = & r(J_{\m_i},w_{A_{\m_i}})\\ 
           = & r(\m_iA_{\m_i},w_{A_{\m_i}})\\
           \subseteq & \overline{\m_iA_{\m_i}} \\ = &\m_iA_{\m_i},
         \end{split}
     \end{equation*}
where $\overline{\m_iA_{\m_i}} $ is the integral closure of ${\m_iA_{\m_i}}$. The last equality is due to the fact that the maximal ideal in a local ring is integrally closed. This implies that $r(J,\omega_A)_{\m_i}=J_{\m_i}$ for all $1\leq i\leq n.$ This proves the result.
 \end{proof}

\begin{remark}\label{remark}
 \normalfont 
  Let $A$ be a Cohen-Macaulay ring of dimension $d\geq 1$. Suppose $\omega_A$ be a canonical $A$-module.
 \begin{enumerate}
     \item Let $I$ a regular ideal of $A$. Since $\omega_A$ is a faithful $A$-module (see \cite{Bruns}, Thoerem 3.3.13) it follows that ann$(\omega_A)=0.$ By Theorem 5.3 of \cite{Zulfeqarr}, the filtration $\F^I_{\omega_A}$ is $I$-stable. Hence, the Rees module $\R(\F^I_{\omega_A})$ is a finitely generated $\R^I$-module. 
     \item Set $S=\R(\F^J_{\omega_A})=\bigoplus_{n\geq 0}r(J^n,\omega_A)t^n$. Note that $S$ is a $\R^J$-module. If $A$ is a Hilbert ring, then $S=A[t]$ is a Noetherian ring but not a finitely generated $\R^J$-module.
     \item If $J$ is a regular ideal, then by Theorem 5.3 of \cite{Zulfeqarr}, $S$ is a finitely generated $\R^J$-module. For instance, if $A$ is a domain and $J\neq 0$ then $J$ is regular.
     \item If $A$ is a Gorenstein ring then $w_{A_{\p}}\cong A_{\p}$ for all $\p\in Spec(A).$ This implies that $r(I^n,\omega_A)=\wt{I^n}$ for all $n\geq 1$ and any ideal $I.$ 
 \end{enumerate}

\end{remark}
\begin{lemma}\label{up}
    Let $A$ be a Cohen-Macaulay ring of dimension $d\geq 1$ and $\omega_A$ be a canonical $A$-module. Let $J\neq 0$ be the Jacobson radical ideal of $A$ containing a non-zero divisor. Then the Krull dimension of $S/\wt\R^J\leq d.$ 
\end{lemma}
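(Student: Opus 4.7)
The plan is to identify $E := S/\wt{\R}^J$ as a finitely generated graded $\R^J$-module and then exhibit a non-zero-divisor of $\R^J$ inside $\ann_{\R^J}(E)$. Since $J$ contains a non-zero-divisor, $\dim \R^J = d+1$, and the existence of such an annihilator element will immediately give $\dim E \le \dim \R^J - 1 = d$. The finite generation of $E$ is in place: by Remark \ref{rmk:tildeI^nM}(1) and Remark \ref{remark}(3), both $\wt{\R}^J$ and $S$ are finitely generated $\R^J$-modules.

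The key step is a localization argument. Let $\p$ be any minimal prime of $A$. Since $J$ contains a non-zero-divisor and non-zero-divisors avoid every associated prime, $J \not\subseteq \p$, so $J_\p = A_\p$. Both of the operations $I \mapsto \wt{I}$ and $I \mapsto r(I,\omega_A)$ are defined by stabilized colons, hence commute with localization; therefore
\[
(\wt{J^n})_\p = \wt{A_\p} = A_\p, \qquad r(J^n,\omega_A)_\p = r(A_\p,(\omega_A)_\p) = A_\p,
\]
forcing $(E_n)_\p = 0$ for every $n \ge 0$ and so $E_\p = 0$.

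Now I choose finitely many homogeneous $\R^J$-generators $e_1,\dots,e_r$ of $E$. The vanishing above gives $\ann_A(e_i) \not\subseteq \p$ for every minimal prime $\p$ of $A$; since $\prod_i \ann_A(e_i) \subseteq \bigcap_i \ann_A(e_i) = \ann_A(E)$, the ideal $\ann_A(E)$ also avoids every minimal prime of $A$. Because $A$ is Cohen–Macaulay, $\Ass(A)$ consists of minimal primes only, so the set of zero-divisors of $A$ equals $\bigcup_{\p\in\Ass(A)} \p$; prime avoidance then yields an element $a \in \ann_A(E)$ that is a non-zero-divisor of $A$. As $\R^J \subseteq A[t]$ and $A[t]$ is flat over $A$, $a$ remains a non-zero-divisor in $\R^J$. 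Hence $\ann_{\R^J}(E) \supseteq (a)$, and
\[
\dim E \le \dim \R^J/(a) \le \dim \R^J - 1 = d.
\]

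The only delicate point is the compatibility of $r(J^n,\omega_A)$ with localization; this is routine from the stabilized-colon description $r(J^n,\omega_A) = (J^{n+k}\omega_A :_A J^k\omega_A)$ for $k \gg 0$ together with the standard commutation of colons with localization. Everything else is dimension-theoretic bookkeeping.
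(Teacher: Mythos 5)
Your argument is correct, but it follows a genuinely different route from the paper's. The paper embeds $E=S/\wt{\R}^J$ degreewise into $\bigoplus_{n\ge 0}A/\wt{J^{n}}$, deduces $\Ass_{\R^J}(E)\subseteq \Ass_{\R^J}(\wt{L}^J(A))=\Ass_{\R^J}(\wt{G}_J(A))$ from Lemma 3.3 of \cite{samunmixed}, and then reads off $\dim_{\R^J}(E)\le \dim \wt{G}_J(A)=d$ from \cite[Theorem 4.5.6]{Bruns}. You instead show that $E$ is $A$-torsion: since $J$ contains a non-zero-divisor it avoids every minimal prime $\p$ of $A$, so $E_\p=0$; because $A$ is Cohen--Macaulay it has no embedded primes, so prime avoidance applied to $\ann_A(E)$ (computed from finitely many homogeneous generators, using that $A$ is central in $\R^J$) yields a non-zero-divisor $a$ killing $E$, whence $\dim E\le\dim\R^J/(a)\le\dim\R^J-1\le d$. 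All steps check out: the commutation of the stabilized colons with localization is routine as you say (and in fact is not even needed, since $J^n\subseteq\wt{J^n}\subseteq r(J^n,\omega_A)$ forces all three to localize to $A_\p$ at a minimal prime), and the dimension facts you invoke ($\dim\R^J\le d+1$ when $\dim A=d$, and $\dim\R^J/(a)\le\dim\R^J-1$ for a non-zero-divisor $a$) are classical; note that only the inequality $\dim\R^J\le d+1$ is actually used, so the asserted equality $\dim\R^J=d+1$ (which does hold here by the standard Rees-algebra dimension formula, since $J$ avoids all minimal primes) costs nothing. The trade-off: your proof is more self-contained, avoiding the external identification of $\Ass_{\R^J}(\wt{L}^J(A))$ with $\Ass_{\R^J}(\wt{G}_J(A))$, whereas the paper's proof is shorter given that citation and records the finer information that the associated primes of $E$ lie among those of $\wt{G}_J(A)$, which is the kind of control that the unmixedness arguments elsewhere in the paper exploit.
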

\begin{proof}
    From the exact sequence $$0\to S/\wt\R^J=\bigoplus_{n\geq 0}r(J^n,\omega_A)/\wt{J^n}\to \bigoplus_{n\geq 0}A/J^n,$$ we obtain Ass$_{\R^J}(S/\wt\R^J)\subseteq \text{Ass}_{\R^J}(\wt{L}^J(A)).$ By Lemma 3.3 of \cite{samunmixed}, $\text{Ass}_{\R^J}(\wt{L}^J(A))=\text{Ass}_{\R^J}(\wt{G}_J(A))$. By (\cite{Bruns}, Theorem 4.5.6), it follows that $$\dim_{\R^J}(S/\wt\R^J)\leq \dim_{\R^J}(\wt{G}_J(A))=d.$$
\end{proof}
We now define a quasi-Hilbert ring.

\begin{definition}
   Let $A$ be a Cohen-Macaulay ring of dimension $d\geq 1$ and $\omega_A$ be a canonical $A$-module. Let  $J\neq 0$ be the Jacobson radical ideal of $A$ containing a non-zero divisor. We say $A$ is a quasi-Hilbert ring if the Krull dimension of $S/\wt{\R}^J$ is equal to $d$.
\end{definition}

 The next theorem provides sufficient conditions for $A$ to be a quasi-Hilbert ring.

\begin{theorem}
\label{Thm:main}
 Let $A$ be a non-Gorenstein semi-local Cohen–Macaulay ring of dimension $d\geq 1$ and $J$ be the Jacaobson ideal of $A$.  Suppose $\omega_A$ be a canonical $A$-module.  Set $E=S/\wt{\R}^J$. Then the following holds.
  \begin{itemize}
  \item[{(a)}] If $J$ is a regular ideal then the function $n\mapsto \ell{(E_n)}$ is of polynomial type.
      \item[{(b)}] If $A$ is a local ring with $G_J(A)$ unmixed then either $E=0$ or the function $n\mapsto \ell(E_n)$ coincides with a polynomial of degree $d-1$. In particular, if $E\neq 0$ then $A$ is a quasi-Hilbert ring.
  \end{itemize}
  \end{theorem}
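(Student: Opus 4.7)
The plan is to treat both parts as Hilbert-function computations for the finitely generated graded $\R^J$-module $E = S/\wt{\R}^J$, and to use unmixedness in (b) to pin down the dimension of $E$ as exactly $d$.

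For (a), my starting point is that since $J$ is regular and $\omega_A$ is faithful, both $S$ and $\wt{\R}^J$ are finitely generated over $\R^J = A[Jt]$ by Remark \ref{rmk:tildeI^nM}(1) and Remark \ref{remark}(3), so $E$ is a finitely generated graded $\R^J$-module. Each component $E_n = r(J^n,\omega_A)/\wt{J^n}$ embeds into $A/\wt{J^n}$, a quotient of $A/J^n$, which has finite $A$-length because $A$ is semi-local; hence $\ell_A(E_n)<\infty$. I will then pick homogeneous generators $m_1,\ldots,m_s$ of $E$: each $m_i$ lies in a finite-length $A$-module and so is killed by some power $J^{k_i}$, and commutativity of $\R^J$ gives $J^k E = 0$ for $k=\max_i k_i$. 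Thus $E$ becomes a finitely generated graded module over the Noetherian positively graded ring $\R^J/J^k\R^J$, whose degree-$0$ part $A/J^k$ is semi-local Artinian and decomposes as $A/J^k \cong \prod_i A_{\m_i}/\m_i^k A_{\m_i}$. Splitting $E$ accordingly and applying the standard Hilbert-function theorem for graded modules over a positively graded ring with Artinian local degree-$0$ part (e.g.\ \cite[Theorem~4.1.3]{Bruns}) to each summand shows that $\ell(E_n)$ is of polynomial type.

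For (b), set $\m = J$. The key first step is to extract from unmixedness of $G_\m(A)$ the collapse $\wt{\m^n}=\m^n$ for every $n\geq 1$. Since every $P\in \Ass(G_\m(A))$ satisfies $\dim G_\m(A)/P = d \geq 1$ while $G_\m(A)/G_\m(A)_+ \cong A/\m$ has dimension $0$, no associated prime can contain the irrelevant ideal $G_\m(A)_+$. Hence $\grade(G_\m(A)_+,G_\m(A))>0$, and Remark \ref{rmk:tildeI^nM}(2) yields the collapse; consequently $\wt{\R}^\m = \R^\m$ and $\wt{G}_\m(A) = G_\m(A)$. Repeating the associated-prime containment from the proof of Lemma \ref{up} then gives $\Ass_{\R^\m}(E) \subseteq \Ass_{\R^\m}(G_\m(A))$. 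Each such $P$ contains $\m\R^\m$, since $\R^\m/\m\R^\m = G_\m(A)$, so it corresponds to an associated prime $\overline{P}$ of $G_\m(A)$; by unmixedness $\dim \R^\m/P = \dim G_\m(A)/\overline{P} = d$. Therefore, if $E\neq 0$, then $\dim_{\R^\m}E \geq d$, and combining with Lemma \ref{up} forces $\dim_{\R^\m}E = d$. Invoking part (a) in the local setting and the fact that the degree of the Hilbert polynomial equals $\dim E - 1$, the function $\ell(E_n)$ coincides with a polynomial of degree exactly $d-1$ for $n\gg 0$, and the quasi-Hilbert conclusion follows directly from the definition.

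The main obstacle is the dimension and associated-prime bookkeeping in (b): extracting the grade condition from unmixedness that allows one to replace $\wt{G}_\m(A)$ by $G_\m(A)$, and then carefully chaining the associated-prime containment with unmixedness to ensure that \emph{every} $P \in \Ass_{\R^\m}(E)$ yields a quotient of $\R^\m$ of full dimension $d$. By contrast, the technical heart of (a) is the commutativity argument producing a single power $J^k$ annihilating all of $E$, after which the computation dissolves into the standard Hilbert theory applied factor-by-factor to $A/J^k \cong \prod_i A_{\m_i}/\m_i^k A_{\m_i}$.
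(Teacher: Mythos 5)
Your proof is correct, and in part (b) it takes a genuinely more self-contained route than the paper. For (a) the two arguments are essentially the same: the paper shows $\ell(E_n)<\infty$ by localizing at non-maximal primes (any prime containing $J$ is maximal in a semi-local ring, so $J_\p=A_\p$ and $(E_n)_\p=0$), while you embed $E_n$ into $A/\wt{J^n}$ and use that $A/J^n$ is Artinian — the same fact about $V(J)$ in different clothing; your extra step producing $J^kE=0$ and reducing to a standard graded ring over the Artinian semi-local ring $A/J^k$ is a useful explicit justification of ``polynomial type'' which the paper leaves implicit. For (b) both proofs begin identically: unmixedness (together with equidimensionality, which the paper extracts from Proposition \ref{UE}(1) and which you should invoke if ``unmixed'' is only taken to mean that all associated primes have equal dimension) forces $\grade(G_\m(A)_+,G_\m(A))>0$, hence $\wt{\m^n}=\m^n$ and $\wt{\R}^\m=\R^\m$. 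At that point the paper simply cites Theorem 3.4 of \cite{samunmixed} for the degree-$(d-1)$ conclusion, whereas you prove it inside the paper's own toolkit: $\Ass_{\R^\m}(E)\subseteq \Ass_{\R^\m}(G_\m(A))$ (the containment already established in the proof of Lemma \ref{up}, which rests on Lemma 3.3 of \cite{samunmixed}), every such prime gives $\dim \R^\m/P=d$ by unmixedness, so $\dim E=d$ by Lemma \ref{up}, and then the Hilbert polynomial of $E$ has degree $\dim E-1=d-1$. The last step is legitimate because, as you arranged in (a), $\m^k E=0$, so $E$ is a finitely generated graded module over $\R^\m/\m^k\R^\m$ with Artinian local degree-zero part and its Krull dimension is unchanged by this passage; spelling out that one sentence would make the argument airtight. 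The trade-off: the paper's proof is shorter but leans on an external theorem of the companion preprint, while yours isolates exactly which ingredient from \cite{samunmixed} is needed (the associated-prime identity behind Lemma \ref{up}) and derives the dimension and degree statements directly.
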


  \begin{proof}
       (a) By Remark \ref{remark}(3), $E$ is a finitely generated $\R^J$-module. Note that $E_n=r(J^n,\omega_A)/\wt{J^n}$ for each $n\geq 0$. Let $\p$ be a prime ideal of $A$ not in $\m Spec(A)$. Now, $$(E_n)_{\p}=(r(J^n,\omega_A)/\wt{J^n})_{\p}=r(J^n_{\p},w_{A_\p})/\wt{J^n_{\p}}=0.$$ Therefore, $\ell(E_n)<\infty$ for all $n\geq 0.$ It follows that the function $n\mapsto \ell{(E_n)}$ is of polynomial type.
       
       (b) Since $A$ is local, $J=\m$ contains a non-zero divisor. If $E=0$, then we have nothing to show. Suppose $E\neq 0$. By the Proposition \ref{UE}, we have $G_J(A)$ is unmixed and equidimensional. Since the dimension of $A$ is positive, we have depth $G_J(A)>0$. By Remark \ref{rmk:tildeI^nM}(2), $\wt{J^n}=J^n$ for all $n\geq 0$. It follows that $\wt{\R^J}=\R^J$. The result follows due to Theorem 3.4 of \cite{samunmixed}. 
  \end{proof}

The next result says that Theorem \ref{Thm:main} is true for an arbitrary ideal under some extra conditions on the ring.

\begin{theorem}
\label{Thm:main1}
     Let $A$ be a non-Gorenstein semilocal Cohen-Macaulay ring of dimension $d\geq 1$ with $A_{\p}$ is Gorenstein for all $\p\notin \m {Spec}(A)$. Let $I$ be a regular ideal of $A$ and $\omega_A$ a canonical $A$-module.
      Set $E=\R(\F^I_M)/\wt{\R}^I$. Then the following holds.
  \begin{itemize}
  \item[{(a)}] The function $n\mapsto \ell{(E_n)}$ is of polynomial type.
      \item[{(b)}] If $A$ is a local ring with $G_I(A)$ unmixed then either $E=0$ or the function $n\mapsto \ell(E_n)$ coincides with a polynomial of degree $d-1$.
  \end{itemize}
\end{theorem}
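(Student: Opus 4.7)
The plan is to mirror the proof of Theorem~\ref{Thm:main} while using the hypothesis that $A_{\p}$ is Gorenstein for every $\p \notin \m\Spec(A)$ as a substitute for the equality $J_{\p}=A_{\p}$ used there. The key observation is that whenever $A_{\p}$ is Gorenstein, $\omega_{A_{\p}} \cong A_{\p}$, so by Remark~\ref{remark}(d) we have $r(I^n_{\p},\omega_{A_{\p}}) = \wt{I^n_{\p}}$ for every $n \geq 1$. Combined with the fact that both $r(I^n,-)$ and the Ratliff--Rush closure commute with localization (the defining ascending chains of colon ideals stabilize at some global level $k_0$, and colons against finitely generated modules commute with localization, so the globally stable level is automatically locally stable), this yields $(E_n)_{\p} = 0$ for every $\p \notin \m\Spec(A)$.

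For part~(a), I would first apply Remark~\ref{remark}(3) together with faithfulness of $\omega_A$ (so $\ann(\omega_A) = 0$) to conclude that $\R(\F^I_{\omega_A})$, $\wt{\R}^I$, and hence their quotient $E$ are finitely generated $\R^I$-modules. The paragraph above then gives $\operatorname{Supp}_A(E_n) \subseteq \m\Spec(A)$, so $\ell(E_n) < \infty$ for all $n$. Decomposing $E$ at each maximal ideal of the semilocal ring $A$ and invoking the standard Hilbert-function theorem for a finitely generated graded module over a standard graded Noetherian algebra whose components have finite length, the function $n \mapsto \ell(E_n)$ is of polynomial type.

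For part~(b), the finite length of $E_n$ is immediate from~(a) since $\m\Spec(A) = \{\m\}$. To identify the degree, I follow the argument of Theorem~\ref{Thm:main}(b). Because the local Cohen--Macaulay ring $A$ is equidimensional and universally catenary, Proposition~\ref{UE}(1) shows $G_I(A)$ is equidimensional; combined with unmixedness, every $P \in \Ass G_I(A)$ satisfies $\dim G_I(A)/P = d$. Since $I$ is regular, $\htt(I) \geq 1$, so $\dim A/I \leq d-1 < d = \dim G_I(A)$; hence $G_I(A)_+$ cannot be contained in any associated prime (otherwise $G_I(A)/P$ would be a quotient of $A/I$), giving $\grade(G_I(A)_+, G_I(A)) > 0$. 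By Remark~\ref{rmk:tildeI^nM}(2) this forces $\wt{I^n} = I^n$ for all $n \geq 1$, so $\wt{\R}^I = \R^I$ and $E = \R(\F^I_{\omega_A})/\R^I$. The degree $d-1$ assertion (when $E \neq 0$) then follows from Theorem~3.4 of \cite{samunmixed}, applied exactly as in the proof of Theorem~\ref{Thm:main}(b).

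The main technical obstacle I expect is a careful justification of the two localization identities $r(I^n,\omega_A)_{\p} = r(I^n_{\p},\omega_{A_{\p}})$ and $\wt{I^n}_{\p} = \wt{I^n_{\p}}$; both reduce to the observation that the defining chains of colons stabilize globally at some $k_0$ and that a globally stable level of an ascending chain is automatically stable after localization. A smaller point in~(b) is verifying $\grade(G_I(A)_+,G_I(A)) > 0$ from unmixedness; this rests on the dimension comparison $\dim A/I < d = \dim G_I(A)$, which holds in any local Cohen--Macaulay ring whenever $I$ contains a non-zerodivisor.
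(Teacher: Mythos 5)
Your proposal is correct and follows essentially the same route as the paper: localize at primes outside $\m\Spec(A)$, use the Gorenstein hypothesis there (via Remark~\ref{remark}(4)) to get $(E_n)_{\p}=0$ and hence $\ell(E_n)<\infty$, and in (b) use Proposition~\ref{UE} and unmixedness to conclude $\wt{\R}^I=\R^I$ before invoking Theorem~3.4 of \cite{samunmixed}. Your filled-in details (the grade argument giving $\grade(G_I(A)_+,G_I(A))>0$ and the localization identities) are exactly what the paper leaves implicit; the only nit is that finite generation of $E$ for a general regular ideal $I$ should be cited as Remark~\ref{remark}(1) rather than (3).
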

\begin{proof}
       (a) By Remark \ref{remark}(1), $E$ is a finitely generated $\R^I$-module. Note that $E_n=r(I^n,w)/\wt{I^n}$ for each $n\geq 0$. Let $\p$ be a prime ideal of $A$ not in $\m$-Spec$A$. Since $A_{\p}$ is Gorenstein we have $$(E_n)_{\p}=(r(I^n,\omega_A)/\wt{I^n})_{\p}=r(I^n_{\p},w_{A_\p})/\wt{I^n_{\p}}=r(I^n_{\p},A_{\p})/\wt{I^n_{\p}}=0.$$ Therefore, $\ell(E_n)<\infty$ for all $n\geq 0.$ It follows that the function $n\mapsto \ell{(E_n)}$ is of polynomial type.
       
       (b) If $E=0$, then we have nothing to show. Suppose $E\neq 0$. By the Proposition \ref{UE}, we have $G_I(A)$ is unmixed and equidimensional. It follows that $\wt{\R^I}=\R^I$. The result follows due to Theorem 3.4 of \cite{samunmixed}. 
  \end{proof} 

We now give some examples that satisfy the hypothesis of Theorem \ref{Thm:main} and \ref{Thm:main1}. We have used both Singular and Macaulay 2 to verify these examples. 

\begin{remark}[\cite{Bruns}, Theorem 3.3.7(b)]
    Let $R$ be a Cohen-Macaulay local ring that admits a canonical module (say $w_R$) and $I$ an ideal of $R$. Let $A=R/I$ be a Cohen-Macaulay ring. Then $\omega_A\cong \Ext^t_R(A,R)$, where $t=\dim(R)-\dim(A)$.
\end{remark}

\begin{example}
\normalfont
Let $A=k[[t^{11},t^{17},t^{18}]]$ be a subring of $k[t]$, where $k$ is a field. Using singular, it can be checked that $A\cong k[[x,y,z]]/I$, where $I=(x^3z-y^3,x^5y-z^4,x^8-y^2z^3).$ Note that $(A,\m)$ is a Cohen-Macaulay local domain and $A/x\cong k[[y,z]]/(y^3,z^4,y^2z^3)$. The Hilbert series of $A/x$ is $$H_{A/x}(t)=1+2t+3t^2+3z^3+2z^4,$$ which is not symmetric. By Corollary 4.4.6(a) of \cite{Bruns}, $A$ is not a Gorenstein ring. Note that the canonical module ($\omega_A$) of $A$ is isomorphic to a submodule of $A^2$ and is generated by $\big((y,z),(x^3,y^2),(z^3,x^5)\big).$ Using singular, we can also check that $$(\m^{65}\omega_A:\m^{59}\omega_A)=(z^4,y^2z^3,xyz^3,y^3z^2,y^4z,xy^3z,y^5,xy^4,x^2y^3,x^2y^2z^2,x^4y^2,x^6)$$ and $$\m^6=(z^4,y^3z^2,y^4z,xy^3z,y^5,xy^4,x^2y^3,xy^2z^3,x^2yz^3,x^2y^2z^2,x^4y^2,x^6).$$ Using Macaulay 2, it can be checked that  $G_{\m}(A)$ is unmixed and $xyz^3\notin \m^6$. Thus $r{(\m^6,\omega_A)}\neq \m^6$.  This satisfies the hypothesis of Theorem \ref{Thm:main}. 
\end{example}

\begin{example}
\normalfont
    Let $A= k[[x,y,z]]/I$, where $I=(x^3z-y^3,x^5y-z^4,x^8-y^2z^3).$  By the previous example, $A$ is a non-Gorenstein local Cohen--Macaulay domain. The only prime ideals of $A$ are $(0)$ and $\mathfrak m$. Since $A$ is a domain, the localization $A_{(0)}$ is a field and hence Gorenstein.

Let $K=(x,y)$. Clearly, $K \neq \mathfrak m$. Using \textsc{Singular}, one checks that
\[
(K^2 \omega_A : K\omega_A) = (x,y,z^3) \neq K,
\]
and therefore $r(K,\omega_A) \neq K$. Moreover, a computation in \textsc{Macaulay2} shows that the associated graded ring $G_K(A)$ is unmixed. Thus, the hypotheses of Theorem~\ref{Thm:main1} are satisfied.

\end{example}
In Singular, to compute isomorphisms of $A$, canonical modules, and colon ideals, we use the following commands.

\begin{verbatim}
    ring R=0,(x,y,z),ds;
    ideal m=x,y,z;
    ring S=0,t,ds;
    map g=R,t^11,t^17,t^18;
    setring R;
    ideal I=kernel(S,g);
    I; 
    matrix m[1][5]=x^3z-y^3,x^2y^4-z^5,x^5y-z^4,y^7-xz^6,x^8-y^2z^3;
    matrix N=m;
    Ext_R(2,N);
    module M1=Ext_R(2,N);
    qring A=std(I);
    setring A;
    module w=imap(R,M1);
    ideal m=x,y,z;
    ideal D=quotient(m^65*w,m^59*w);
    D;
\end{verbatim}
\enquote{$I;$} will give the generators of $I=(x^3z-y^3,x^2y^4-z^5,x^5y-z^4,y^7-xz^6,x^8-y^2z^3)$. \enquote{matrix N=m} will give the cokernel of the map $\R^5\xrightarrow{[m]}R$, i.e. $R/I.$ \enquote{$\Ext_R(2,N);$} will calculate $\Ext^2_R(A/I,R)\cong \omega_A$. \enquote{qring A=std(I);} and \enquote{setring A;} will define the quotient ring $A=R/I$. \enquote{module w=imap(R,M1);} will consider $w=\Ext_R(2,N)$ as $A$-module. Finally, \enquote{ideal D=quotient($m^{65}*w,m^{59}*w$);} will calculate the colon ideal $(\m^{65}w:\m^{59}w).$

To check the unmixedness of $G_{\m}(A)$ and $G_K(A)$ in the above examples, we have used the following commands in Macaulay 2.
\begin{verbatim}
    R=QQ[x,y,z]
    I=ideal(x^3z-y^3,x^5y-z^4,x^8-y^2z^3)
    A=R/I
    m=ideal(x,y,z)
    reesAlgebra(m)
    T=reesAlgebra(m)
    mT=sub(m,T)
    W=T/mT
    prune W
    ideal oo
    associatedPrimes oo
\end{verbatim}
\enquote{associatedPrimes oo} will give the associated primes of $W=G_{\m}(A)$. Replacing $\m$ by $K$, we get the associated primes of $W=G_K(A)$. To compute a minimal generating set of an ideal and to check whether an element 
$f$ of the ring belongs to the ideal, we use the following commands.

\begin{verbatim}
    R=QQ[x,y,z]
    I=ideal(x^3z-y^3,x^5y-z^4,x^8-y^2z^3)
    A=R/I
    m=ideal(x,y,z)
    mingens m^6
    f=x*y*z^3
    member(f,m^6)
\end{verbatim}

\section{Some more analysis on $r(I^n,M)$ and $\wt{I^nM}$}
In this section, we discuss the relation between $\rho(I^n,M)$, $s^*(I^n,M)$ and $s(r(I^n,M)).$ At the end of this section, using these relations, we give an example which says that the unmixedness condition on $G_{\m}(A)$ can't be removed from the hypothesis of Theorem \ref{Thm:main}.

Recall (see \ref{rit}, \ref{rho}, \ref{rmk:tildeI^nM}),
 $$s(r(I^t,M))=\text{min}\{ n\, |\, r{(I^t,M)}=(I^{n+t}M:_AI^nM)\} \text{ for all $t\geq 1$},$$

$$\rho(I^t,M)=\text{min}\{ n\, |\, \wt{I^tM}=(I^{n+t}M:_MI^n)\} \text{ for all $t\geq 1$},$$

and $$s^*(I,M)=\text{min}\{ n\, |\, \wt{I^mM}=I^mM \text{ for all }m\geq n\}.$$

The next result compares the inequality between $\rho(I^n,M)$ and $s^*(I^n,M).$

\begin{proposition}\label{ref}
    Let $A$ be a ring, $I$ a regular ideal of $A$, and $M$ be an $A$-module. Then 
    \begin{itemize}
        \item[{(a)}] $\rho(I^n,M)\leq \rho(I^{n+1},M)+1$ for all $n\geq 1.$
        \item[{(b)}]  $\rho(I^n,M)\leq s^*(I,M)-n$ for all $n<s^*(I,M).$
    \end{itemize}    
\end{proposition}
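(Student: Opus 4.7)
The plan is to prove part (a) directly from the $I$-filtration property of the Ratliff--Rush filtration $\{\wt{I^n M}\}_{n\geq 0}$, and then deduce part (b) by iterating (a) together with the definition of $s^*(I,M)$.

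For (a), I would set $k = \rho(I^{n+1}, M)$, so that $\wt{I^{n+1} M} = (I^{n+1+k} M :_M I^k)$, and aim to show $\wt{I^n M} = (I^{n+k+1} M :_M I^{k+1})$, which by definition yields $\rho(I^n, M) \leq k+1$. The inclusion $(I^{n+k+1} M :_M I^{k+1}) \subseteq \wt{I^n M}$ is automatic, since the chain $\{(I^{n+j} M :_M I^j)\}_{j \geq 0}$ is ascending and stabilizes at $\wt{I^n M}$. For the reverse inclusion, pick $x \in \wt{I^n M}$. Exploiting that $\{\wt{I^n M}\}_{n\geq 0}$ is an $I$-filtration (see \ref{rho}), we have $I \cdot \wt{I^n M} \subseteq \wt{I^{n+1} M}$, so $Ix \subseteq (I^{n+1+k} M :_M I^k)$; multiplying by $I^k$ then yields $I^{k+1} x \subseteq I^{n+k+1} M$, i.e.\ $x \in (I^{n+k+1} M :_M I^{k+1})$, as desired.

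For (b), set $s = s^*(I, M)$. Since $\wt{I^s M} = I^s M = (I^s M :_M I^0)$ by the very definition of $s^*$, we have $\rho(I^s, M) = 0$. Iterating the bound from (a) a total of $s - n$ times yields
\[
\rho(I^n, M) \leq \rho(I^{n+1}, M) + 1 \leq \cdots \leq \rho(I^s, M) + (s-n) = s - n
\]
for every $n < s$. The main technical step is the application of the $I$-filtration property in (a); beyond this, the argument is a straightforward telescoping from the stable value at $n = s$ backward. I do not foresee a substantive obstacle, as both ingredients --- the containment $I \cdot \wt{I^n M} \subseteq \wt{I^{n+1} M}$ and the vanishing $\rho(I^s, M) = 0$ --- are immediate consequences of the definitions already recorded in the preliminaries.
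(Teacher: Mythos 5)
Your proof is correct and takes essentially the same route as the paper: part (b) is the identical telescoping from $\rho(I^{s},M)=0$, and part (a) establishes the same equality $\wt{I^{n}M}=(I^{n+1+k}M:_M I^{k+1})$, hence the bound $\rho(I^n,M)\leq \rho(I^{n+1},M)+1$. The only difference is mechanical: you get the forward inclusion from the filtration property $I\,\wt{I^nM}\subseteq \wt{I^{n+1}M}$ together with the trivial reverse inclusion, while the paper iterates the colon identities $(I^{n+1+k}M:_M I^{k+1})=((I^{n+1+k}M:_M I^{k}):_M I)=\cdots$, but both arguments rest on exactly the same stabilization of the chain defining $\wt{I^{n+1}M}$ at step $k$.
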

\begin{proof}
   (a) Let $\rho(I^{n+1},M)=t$. This implies that $\wt{I^{n+1}M}=I^{n+1+t}M:I^{t}.$ Now, we have
     \begin{equation*}
        \begin{split}
        (I^{n+1+t}M:I^{t+1})= &((I^{n+1+t}M:_MI^{t}):_AI) \\ = &((I^{n+2+t}M:_MI^{t+1}):_AI)\\ = & (I^{n+2+t}M:I^{t+2}).
        \end{split}
    \end{equation*}
    Continuing this process, we get $\wt{I^{n}M}=(I^{n+1+t}M:_MI^{t+1}).$ This proves part(a).
   
   (b)  Let $s^*(I,M)=t.$ This implies that $\wt{I^nM}=I^nM$ for all $n\geq t.$ Note that $\wt{I^nM}=I^nM$ implies that $\rho(I^n,M)=0$. By part(a),
    \begin{equation*}
        \begin{split}
            \rho(I^n,M) & \leq \rho(I^{n+1},M)+1\\ 
            & \leq \ldots \\ 
            & \leq \rho(I^{n+t-n-1},M)+t-n-1 \\ 
            &\leq \rho(I^t,M)+t-n \\ 
            & =t-n.   
        \end{split}
    \end{equation*}
     This proves part(b).
\end{proof}

\begin{proposition}\label{bound}
     Let $A$ be a ring, $I$ a regular ideal of $A$, and $M$ be an $A$-module such that grade$(I,M)>0$. Then $s(r(I^n,M))\leq \rho(I^{n},M)\leq s^*(I,M)-n$ for all $n\geq 1.$
\end{proposition}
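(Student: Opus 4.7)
The plan is to treat the two inequalities separately. The second inequality $\rho(I^n,M) \le s^*(I,M)-n$ is essentially already Proposition~\ref{ref}(b), which establishes it for $n < s^*(I,M)$; for $n \ge s^*(I,M)$ I would invoke the grade hypothesis via Remark~\ref{rmk:tildeI^nM}(1) to get $\wt{I^nM} = I^nM$, hence $\rho(I^n,M) = 0$, and the bound becomes trivial. So the real substance lies in showing $s(r(I^n,M)) \le \rho(I^n,M)$.

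For that, my plan is to exploit the adjoint-type identity
$$(JM :_A KM) = \bigl((JM :_M K) :_A M\bigr),$$
valid for arbitrary ideals $J,K$ of $A$, which translates the ideal colon defining $r(I^n,M)$ into a module colon (of the kind defining $\wt{I^nM}$) followed by a further colon with $M$. The verification is a one-line chase: for $a \in A$, the condition $aKM \subseteq JM$ is equivalent to $aM \subseteq (JM :_M K)$. Specializing $J = I^{n+t}$ and $K = I^t$ yields
$$(I^{n+t}M :_A I^tM) = \bigl((I^{n+t}M :_M I^t) :_A M\bigr).$$

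Setting $k = \rho(I^n,M)$, by the very definition of $\rho$ the chain $\bigl\{(I^{n+t}M :_M I^t)\bigr\}_{t\ge 0}$ has stabilized by $t = k$ with value $\wt{I^nM}$. The displayed identity then forces the ideal chain $\bigl\{(I^{n+t}M :_A I^tM)\bigr\}_{t\ge 0}$ to be constant equal to $(\wt{I^nM} :_A M)$ for every $t \ge k$. Since $r(I^n,M)$ is defined as the stable value of this ideal chain, I obtain $r(I^n,M) = (\wt{I^nM} :_A M)$ and $s(r(I^n,M)) \le k = \rho(I^n,M)$, as required.

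I do not expect a serious obstacle: the first inequality reduces to a routine colon-adjunction manipulation once the identity above is observed, and the second is already in the paper. The only caveat worth flagging is that the hypothesis $\grade(I,M) > 0$ plays no role in the first inequality — it enters solely to guarantee that $s^*(I,M)$ is finite, so that the right-hand bound is a meaningful number.
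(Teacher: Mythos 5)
Your proposal is correct and takes essentially the same route as the paper: the colon-adjunction identity $(I^{n+t}M :_A I^tM) = \bigl((I^{n+t}M :_M I^t) :_A M\bigr)$ is precisely the manipulation the paper uses to deduce $s(r(I^n,M)) \le \rho(I^n,M)$ from the stabilized module-colon chain, and the second inequality is quoted from Proposition~\ref{ref}(b) exactly as in the paper. The only quibble—that for $n > s^*(I,M)$ the right-hand side is negative while $\rho(I^n,M)=0$, so the stated bound is vacuous at best—is a defect of the proposition's statement itself and affects the paper's own proof equally, so your treatment is not worse.
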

\begin{proof}
     Let $\rho(I^{n},M)=t$. This implies that $\wt{I^{n}M}=I^{n+t}M:I^{t}.$ Now, we have
          \begin{equation*}
        \begin{split}
       (I^{n+t}M:I^{t}M)= & ((I^{n+t}M:_MI^{t}):_AM) \\ = &((I^{n+1+t}M:_MI^{t+1}):_AM)\\ = & (I^{n+1+t}M:_AI^{t+1}M).
        \end{split}
    \end{equation*}
     
      Continuing the same process, we get $r{(I^{n},M)}=I^{n+t}M:I^{t}M.$ This proves the first inequality. The second inequality follows from the Proposition \ref{ref}(b).
\end{proof}
The next result is the module analogue of the theorem due to Rossi and Swanson. Although the proof is essentially the same, we provide it here for clarity.
\begin{proposition}\label{rossi}
    Let $A$ be a local ring and $I$ an ideal of $A$ having principal reduction such that grade$(I,M)>0$. Then $s^*(I,M)\leq r(I)$.
\end{proposition}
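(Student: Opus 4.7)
The plan is to adapt the classical Rossi--Swanson argument to the module setting by passing through a principal reduction. Choose a principal reduction $(x)$ of $I$ attaining the reduction number $r(I)$; such a choice is available because every principal reduction is automatically a minimal reduction of $I$, and $r(I)$ is the minimum of $r_K(I)$ over all minimal reductions $K$.

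The first crucial step is to establish that $x$ is $M$-regular. Since $(x)$ is a reduction, $I^{n+1}\subseteq (x)$ for some $n$, and hence $\sqrt{(x)}=\sqrt{I}$. If $x$ were contained in some $\p\in\Ass(M)$, then $I\subseteq \sqrt{(x)}\subseteq \p$, contradicting the hypothesis $\grade(I,M)>0$. Consequently every power $x^k$ is a non-zero-divisor on $M$. Setting $r=r_{(x)}(I)=r(I)$, so that $I^{r+1}=xI^r$, a routine induction upgrades this to
\[
I^{n+k}=x^k I^n \qquad \text{for all } n\geq r \text{ and all } k\geq 0.
\]

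The final step is a short Ratliff--Rush computation. Fix $n\geq r$ and let $m\in \wt{I^n M}$; by definition there exists an integer $k$, which we may take arbitrarily large, with $I^k m\subseteq I^{n+k}M = x^k I^n M$. In particular $x^k m=x^k m'$ for some $m'\in I^n M$, and the $M$-regularity of $x^k$ then forces $m=m'\in I^n M$. This proves $\wt{I^n M}=I^n M$ for every $n\geq r$, which is exactly the assertion $s^*(I,M)\leq r=r(I)$. The main obstacle is securing the $M$-regularity of $x$; once the associated-primes argument settles this, the remainder of the proof is a direct module-theoretic transcription of the ring-level Rossi--Swanson argument.
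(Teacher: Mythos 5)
Your proof is correct and follows essentially the same route as the paper: use the principal reduction to get $I^{n+k}=x^kI^n$ for $n\geq r$, then cancel $x^k$ against $\wt{I^nM}$ using that $x$ is $M$-regular. You merely make explicit two points the paper leaves implicit, namely the verification via associated primes that $\grade(I,M)>0$ forces $x$ to be $M$-regular, and the fact that the argument applies to every $n\geq r$ rather than just $n=r$.
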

\begin{proof}
    Let $(x)$ be the minimal reduction of $I$ and $r(I)=r$. We have 
    \begin{equation*}
        \begin{split}
            I^rM\subseteq\wt{I^rM}= & (I^{r+n}M:I^n) \\ \subseteq & (I^{r+n}M:x^n) \\ = & (x^nI^{r}M:x^n) \\ =&I^rM
        \end{split}
    \end{equation*}
This proves the result.
\end{proof}
We now discuss the example, which states that the hypothesis $G_{\m}(A)$ being unmixed cannot be dropped in Theorem \ref{Thm:main}. 

\begin{example}\label{ex}
\normalfont
    Let $A=k[[t^{5},t^{6},t^{13}]]$ be a subring of $k[t]$, where $k$ is a field. Using singular, it can be checked that $A\cong k[[x,y,z]]/I$, where $I=(xz-y^3,z^2-x^4y,y^2z-x^5,y^5-x^6).$ Note that $(A,\m)$ is a local Cohen-Macaulay domain and $A/x\cong k[[y,z]]/(y^3,z^2,y^2z)$. The Hilbert series of $A/x$ is $$H_{A/x}(t)=1+2t+2t^2.$$ Since the socle dimension of $A/x$ is greater than or equal to $2$, $A/x$ is not Gorenstein and hence $A$ is not Gorenstein. Let $\omega_A$ be the canonical module of $A.$ Note that the ideal $J=(x)$ is a minimal reduction of $\m$ with $r(\m)=4.$ By Proposition \ref{rossi} and \ref{bound}, we have $r(\m^i,\omega_A)=(\m^{i+3}\omega_A:\m^3\omega_A)$ for all $i=1,2,3$ and $r(\m^i,\omega_A)=(\m^{i}\omega_A:\omega_A)$ for all $i\geq 4.$
    
    Using Macaulay $2$, we can check that $G_{\m}(A)$ is not unmixed.
    Using Singular, one can check that the canonical module of $A$ is isomorphic to a submodule of $A^2$ and is generated by $\big((x,-y^2),(y,-z),(z,-x^4)\big)$ with $$(\m^5\omega_A:\m^3\omega_A)=(z,x^2,xy,y^2)\neq \m^2$$ and  $$(\m^6\omega_A:\m^3\omega_A)=(yz,x^3,x^2y,xy^2,y^3)\neq \m^3.$$ We also observed that $r(\m^4,\omega_A)=(x^4,x^{3}y,x^{2}y^2,xy^3,y^4)\subseteq\m^4$. Since $\m^4\subseteq r(\m^4,\omega_A)$ we have $r(\m^4,\omega_A)=\m^4$. 

    \textbf{Claim:} $r(\m^i,\omega_A)=\m^i$ for all $i\geq 4.$

    Fix $i>4$. Let $\alpha \in r(\m^i,\omega_A)=(\m^{i}\omega_A:\omega_A)$. This implies that $\alpha \omega_A\subseteq \m^i\omega_A$. Since $r(\m)=4$ we have $$\alpha \omega_A\subseteq x^{i-4}\m^4\omega_A.$$ By going modulo $(x^{i-4})$, we get $\bar{\alpha}\bar{\omega_A}=0.$ Since $\bar{\omega_A}=w_{A/x^{i-4}}$ (see \cite{Bruns}, Theorem 3.3.5) and $w_{A/x^{i-4}}$ is a faithful $A/(x^{i-4})$-module (see \cite{Bruns}, Theorem 3.3.13), we obtain $\alpha \in (x^{i-4}).$ Let $\alpha=x^{i-4}t$. Since $x^{i-4}$ is $\omega_A$-regular and  from the inclusion $\alpha \omega_A\subseteq x^{i-4}\m^4\omega_A$, we obtain $t\in (x^{i-4}\m^4\omega_A:x^{i-4}\omega_A)=(\m^4\omega_A:\omega_A)=\m^4$. Hence $\alpha \in \m^i.$ This proves the claim.
    
    This implies that neither $E=0$ nor $A$ is a quasi-Hilbert ring (see Theorem \ref{Thm:main}).  
\end{example}

\section{Applications of quasi-Hilbert ring}

In this section, we discuss some applications of a quasi-Hilbert ring.
\begin{theorem}\label{hil}
    Let $(A,\m)$ be a Cohen-Macaulay local ring of dimension $d\geq 1$. Suppose $\omega_A$ is the canonical module of $A.$ Let $L$ be an $A$-module such that $\operatorname{ann}(L)=0$ and for some $r\geq 1$, we have a surjective map  $$\omega^r_A\to L.$$ If $A$ is a quasi-Hilbert ring then the function $n\mapsto \ell(r(\m^n,L)/\wt{\m^n})$ coincides with a polynomial of degree $d-1.$
\end{theorem}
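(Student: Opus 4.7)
The plan is to sandwich the graded $\R^\m$-module
\[
  E_L \;:=\; \bigoplus_{n\ge 0} \frac{r(\m^n,L)}{\wt{\m^n}}
\]
between the analogous module $E_{\omega_A}=S/\wt{\R}^\m$, whose Krull dimension is $d$ by the quasi-Hilbert hypothesis, and an ambient module coming from the Ratliff-Rush filtration of $A$ whose dimension is bounded by $d$ via Lemma \ref{up}. Once $\dim E_L=d$ is established, the degree-$(d-1)$ polynomial follows from standard Hilbert-polynomial theory for finitely generated graded $\R^\m$-modules with finite-length homogeneous components.

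First I would record two functorial properties of $r(\m^n,-)$. (i) If $f\colon M\twoheadrightarrow N$ is surjective, then $r(\m^n,M)\subseteq r(\m^n,N)$, because applying $f$ to $x\m^k M\subseteq \m^{n+k+1}M$ yields $x\m^k N\subseteq \m^{n+k+1}N$. (ii) For every $r\ge 1$, $r(\m^n,M^r)=r(\m^n,M)$, since the relevant colon ideals split coordinate-wise on direct sums. Using (ii) together with a surjection $A^s\twoheadrightarrow \omega_A$ (which exists because $\omega_A$ is finitely generated) and the given surjection $\omega_A^r\twoheadrightarrow L$, two applications of (i) give
\[
  \wt{\m^n}=r(\m^n,A)=r(\m^n,A^s)\subseteq r(\m^n,\omega_A)=r(\m^n,\omega_A^r)\subseteq r(\m^n,L)\subseteq A,
\]
and hence a chain of graded $\R^\m$-submodule inclusions
\[
  E_{\omega_A}\;\hookrightarrow\; E_L\;\hookrightarrow\; \bigoplus_{n\ge 0} A/\wt{\m^n}.
\]

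Because $\m$ contains a non-zerodivisor (as $A$ is Cohen-Macaulay with $d\ge 1$) and $\ann(L)=0$, Lemma \ref{Thm:Istable} makes $\F^\m_L$ an $\m$-stable filtration, so $E_L$ is a finitely generated $\R^\m$-module. The upper-bound argument of Lemma \ref{up} now transfers verbatim: the inclusion into $\bigoplus A/\wt{\m^n}$ together with \cite[Lemma 3.3]{samunmixed} and \cite[Theorem 4.5.6]{Bruns} forces
\[
  \Ass_{\R^\m}(E_L)\;\subseteq\;\Ass_{\R^\m}\wt{L}^\m(A)\;=\;\Ass_{\R^\m}\wt{G}_\m(A),
\]
so $\dim E_L\le d$. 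Combined with the quasi-Hilbert hypothesis $\dim E_{\omega_A}=d$ and the injection $E_{\omega_A}\hookrightarrow E_L$, this pins down $\dim E_L=d$.

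Finally, by Remark \ref{rmk:tildeI^nM}(1) we have $\wt{\m^n}=\m^n$ for all $n\gg 0$, so $(E_L)_n\subseteq A/\wt{\m^n}$ has finite length for large $n$. A finitely generated graded $\R^\m$-module of Krull dimension $d$ whose homogeneous components eventually have finite length has Hilbert function coinciding with a polynomial of degree $d-1$ for $n\gg 0$ (this is exactly the reasoning already used in the proof of Theorem \ref{Thm:main} via \cite[Theorem 3.4]{samunmixed}), which finishes the proof. The one mildly delicate point is confirming that Lemma \ref{up}'s dimension bound transfers from $E_{\omega_A}$ to $E_L$; but the lemma's proof uses only the embedding into $\wt{L}^\m(A)$, which the chain of inclusions above already supplies.
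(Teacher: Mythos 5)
Your proposal is correct and takes essentially the same approach as the paper: both rest on the containments $\wt{\m^n}\subseteq r(\m^n,\omega_A)=r(\m^n,\omega_A^r)\subseteq r(\m^n,L)$ coming from the surjection $\omega_A^r\to L$, which give a graded injection of $\bigoplus_{n\geq 0} r(\m^n,\omega_A)/\wt{\m^n}$ into $\bigoplus_{n\geq 0} r(\m^n,L)/\wt{\m^n}$ and hence the degree-$(d-1)$ lower bound from the quasi-Hilbert hypothesis. Your additional verifications (finite generation via Lemma \ref{Thm:Istable}, finite length of the graded components, and the upper bound $\dim E_L\leq d$ via the argument of Lemma \ref{up}) merely make explicit steps the paper leaves implicit, so this is not a genuinely different route.
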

\begin{proof}
    By Proposition $1.4(a,b)$ of \cite{Zulfeqarr}, we obtain $r(\m^n,w^r_A)=r(\m^n,\omega_A)\subseteq r(\m^n,L)$ for all $n\geq 1.$ Since $\m^n\subseteq r(\m^n,\omega_A)$, we have the following injective map $$r(\m^n,\omega_A)/\wt{\m^n}\to r(\m^n,L)/\wt{\m^n}$$ for all $n\geq 1.$ It follows that the growth of the function $n\mapsto \ell(r(\m^n,L)/\wt{\m^n})$ is at least as that of the the function $n\mapsto \ell(r(\m^n,\omega_A)/\wt{\m^n})$.  Since $A$ is a quasi-Hilbert ring, the function $n\mapsto \ell(r(\m^n,\omega_A)/\wt{\m^n})$ coincides with a polynomial of degree $d-1.$ This implies that the function $n\mapsto \ell(r(\m^n,L)/\wt{\m^n})$ coincides with a polynomial of degree $d-1.$
\end{proof}
\begin{corollary}
     Let $(A,\m)$ be a Cohen-Macaulay local ring of dimension $d\geq 1$. Suppose $\omega_A$ is the canonical module of $A.$ Let $L$ be an injective $A$-module and $\operatorname{ann}(L)=0.$  If $A$ is a quasi-Hilbert ring then the function $n\mapsto \ell(r(\m^n,L)/\wt{\m^n})$ coincides with a polynomial of degree $d-1.$
\end{corollary}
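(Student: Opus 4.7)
The approach is to reduce the corollary to Theorem~\ref{hil}, which already gives the same conclusion under the assumption that $L$ admits a surjection $\omega_A^r \twoheadrightarrow L$ for some $r \geq 1$. Thus the task splits into two pieces: first produce such a surjection using only that $L$ is injective with $\operatorname{ann}(L) = 0$, and then invoke Theorem~\ref{hil} verbatim.

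To build the surjection, I would start with the Matlis structure theorem and decompose
\[
L \;=\; \bigoplus_{\p \in \operatorname{Spec}(A)} E(A/\p)^{(X_\p)}.
\]
The hypothesis $\operatorname{ann}(L) = 0$ is equivalent to $\operatorname{Supp}(L) = \operatorname{Spec}(A)$, which forces $\operatorname{Ass}(L) \supseteq \operatorname{Min}(A)$. Since $A$ is Cohen-Macaulay, $\operatorname{Ass}(\omega_A) = \operatorname{Min}(A)$, so every associated prime of $\omega_A$ appears in the decomposition of $L$. Via the essential embedding $\omega_A \hookrightarrow E(\omega_A) = \bigoplus_{\p \in \operatorname{Min}(A)} E(A/\p)$, together with the fact that each summand $E(A/\p)$ also occurs inside $L$, one produces enough $A$-linear maps $\omega_A \to L$ to cover any prescribed generating set of $L$; summing these maps over such a generating set yields a surjection $\omega_A^{(\Lambda)} \twoheadrightarrow L$ for a suitable indexing set $\Lambda$.

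With this surjection in hand, the argument of Theorem~\ref{hil} carries over: Proposition~1.4 of \cite{Zulfeqarr} gives $r(\m^n,\omega_A) = r(\m^n,\omega_A^{(\Lambda)}) \subseteq r(\m^n, L)$, and combining with $\m^n \subseteq r(\m^n,\omega_A)$ produces an injection
\[
r(\m^n,\omega_A)/\wt{\m^n} \hookrightarrow r(\m^n,L)/\wt{\m^n}.
\]
The quasi-Hilbert hypothesis forces the length function on the left to agree, for $n \gg 0$, with a polynomial of degree $d-1$, hence so must the one on the right.

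The main obstacle is legitimizing the passage from the finite rank $r$ in the statement of Theorem~\ref{hil} to an arbitrary index set $\Lambda$ required for the surjection from a non-finitely-generated $L$. This reduces to checking the identity $r(\m^n, \omega_A^{(\Lambda)}) = r(\m^n, \omega_A)$, which follows from a componentwise computation: $(\m^{n+k+1} M^{(\Lambda)} :_A \m^k M^{(\Lambda)}) = (\m^{n+k+1} M :_A \m^k M)$ for any module $M$ and any set $\Lambda$, and hence the two chains of colon ideals, as well as their stable values, coincide. Once this identity is in place, the rest of the argument is a direct application of Theorem~\ref{hil}.
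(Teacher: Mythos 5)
Your overall strategy is the same as the paper's (exhibit a surjection from copies of $\omega_A$ and quote Theorem~\ref{hil}), but where the paper obtains a surjection $\omega_A^r\to L$ with $r$ \emph{finite} in one stroke from \cite{Bruns}, Exercise~3.3.28(b), your substitute construction has concrete errors. The assertion that $\ann(L)=0$ is equivalent to $\operatorname{Supp}(L)=\Spec(A)$, and hence that $\operatorname{Min}(A)\subseteq\Ass(L)$, is false for injective modules: take $L=E_A(A/\m)$. Then $\ann(L)=0$ (the map $A\to\Hom_A(L,L)\cong\widehat A$ is the completion map, which is injective), yet $\operatorname{Supp}(L)=\Ass(L)=\{\m\}$, so for $d\ge 1$ no minimal prime occurs in its Matlis decomposition. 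Moreover, even when components $E(A/\p)$ with $\p$ minimal do occur in $L$, their presence gives you no maps from $\omega_A$ covering the \emph{other} indecomposable summands $E(A/\q)$ of $L$; the statement you actually need is that the trace of $\omega_A$ in every $E(A/\q)$ equals $E(A/\q)$ (a Matlis/local duality computation over $A_\q$), and your sketch never establishes it. So the existence of the surjection $\omega_A^{(\Lambda)}\twoheadrightarrow L$ is left unproved by your argument, even though the conclusion itself is true.

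The second gap is in the reduction to Theorem~\ref{hil}. Replacing a finite exponent $r$ by an infinite index set $\Lambda$ is not only a matter of the identity $r(\m^n,\omega_A^{(\Lambda)})=r(\m^n,\omega_A)$ (that componentwise colon computation is fine). The proof of Theorem~\ref{hil} also uses that $L$ is finitely generated with $\ann(L)=0$, so that $\F^{\m}_{L}$ is an $\m$-stable filtration (Lemma~\ref{Thm:Istable}) and $\bigoplus_{n}r(\m^n,L)/\wt{\m^n}$ is a finitely generated $\R^{\m}$-module of dimension at most $d$ (as in Lemma~\ref{up}); this is what makes $n\mapsto \ell(r(\m^n,L)/\wt{\m^n})$ eventually polynomial of degree at most $d-1$, which combined with the injection from the $\omega_A$-side pins the degree at exactly $d-1$. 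Your closing inference ``the left-hand function is a polynomial of degree $d-1$, hence so must the one on the right'' uses only the lower bound, and a function dominating a polynomial of degree $d-1$ need not coincide with any polynomial. If you insist on a non-finitely-generated $L$ (which is what your Matlis decomposition presupposes), you must prove stability/polynomiality of $\F^{\m}_{L}$ separately; if you keep the paper's standing convention that modules are finitely generated, then the finite-rank surjection is exactly what \cite{Bruns}, Exercise~3.3.28(b) supplies, and the Matlis machinery is unnecessary.
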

\begin{proof}
    Since $L$ is injective $A$-module, by (\cite{Bruns}, Exercise 3.3.28(b)), we have a surjective map $$\omega^r_A\to L$$ for some $r>0.$ The result now follows from Theorem \ref{hil}.
\end{proof}

\begin{corollary}\label{max} 
     Let $(A,\m)$ be a Cohen-Macaulay local domain of dimension $d\geq 1$. Suppose $\omega_A$ is the canonical module of $A.$ Let $M$ be a maximal Cohen-Macaulay $A$-module and $N_i=\operatorname{Syz}^A_i(M)$ be the $i$th syzygy of $M$.  If $A$ is a quasi-Hilbert ring then the function  $n\mapsto \ell(r(\m^n,N^{\dagger}_i)/\wt{\m^n})$ coincides with a polynomial of degree $d-1$ for all $i\geq 1$, where $N^{\dagger}_i=\Hom(N_i,\omega_A).$
\end{corollary}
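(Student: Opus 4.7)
The plan is to apply Theorem \ref{hil} with $L = N_i^\dagger$; the work consists of verifying (i) $\operatorname{ann}(N_i^\dagger) = 0$ and (ii) the existence of a surjection $\omega_A^r \twoheadrightarrow N_i^\dagger$ for some $r \geq 1$.

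My first step would be to show inductively that each $N_i$ (and hence $N_i^\dagger$) is a maximal Cohen–Macaulay $A$-module. Choosing a free resolution of $M$ produces short exact sequences $0 \to N_i \to F_{i-1} \to N_{i-1} \to 0$ with $F_{i-1}$ finitely generated free and $N_0 = M$ MCM. The depth lemma gives
$$\depth N_i \geq \min(\depth F_{i-1},\ \depth N_{i-1}+1) = \min(d, d+1) = d,$$
while $N_i \hookrightarrow F_{i-1}$ forces $\dim N_i \leq d$, so $N_i$ is MCM, and consequently so is $N_i^\dagger$. For (i): over the CM local domain $A$, any nonzero MCM module $X$ satisfies $\dim(A/\operatorname{ann}(X)) = \dim X = d$, and since $A$ is a domain this forces $\operatorname{ann}(X)=0$; applying this to $X = N_i^\dagger$ (which is nonzero since MCM-duality gives $N_i^{\dagger\dagger} \cong N_i \neq 0$) yields (i).

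For (ii), I would apply $\Hom(-,\omega_A)$ to $0 \to N_i \to F_{i-1} \to N_{i-1} \to 0$. Since $N_{i-1}$ is MCM, the standard vanishing $\Ext^j(N_{i-1},\omega_A) = 0$ for $j \geq 1$ (a basic property of the canonical module, see \cite[Theorem~3.3.10]{Bruns}) produces the short exact sequence
$$0 \to N_{i-1}^\dagger \to F_{i-1}^\dagger \to N_i^\dagger \to 0,$$
with $F_{i-1}^\dagger \cong \omega_A^r$ where $r = \operatorname{rank} F_{i-1}$. This gives the required surjection, and Theorem \ref{hil} then yields at once that $n \mapsto \ell(r(\m^n, N_i^\dagger)/\wt{\m^n})$ coincides with a polynomial of degree $d-1$.

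The argument is essentially a verification of hypotheses and presents no serious obstacle. The main conceptual ingredient is the dualization of the syzygy sequence, which converts an embedding into a surjection precisely because the MCM property of $N_{i-1}$ forces $\Ext^1(N_{i-1},\omega_A) = 0$; the domain hypothesis on $A$ is then used only to secure $\operatorname{ann}(N_i^\dagger) = 0$.
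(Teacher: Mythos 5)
Your proposal is correct and follows essentially the same route as the paper: dualize the syzygy short exact sequences $0\to N_{i}\to F_{i-1}\to N_{i-1}\to 0$ against $\omega_A$, use the MCM property (via \cite[Theorem 3.3.10]{Bruns}) to kill $\Ext^1$ and obtain a surjection $\omega_A^{r}\twoheadrightarrow N_i^{\dagger}$, check $\operatorname{ann}(N_i^{\dagger})=0$ from the MCM-over-a-domain observation, and invoke Theorem \ref{hil}. The only cosmetic difference is that you argue faithfulness via dimension of the annihilator while the paper uses $\Ass(N_i^{\dagger})\subseteq\operatorname{Min}(A)$, which is the same fact.
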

\begin{proof}
   Note that the annihilator of a maximal Cohen-Macaulay module over a Cohen-Macaulay domain is zero because the set of associated primes of $M$ is a subset of the set of minimal primes of $A.$ Set $N_0=M.$ For each $i\geq 0$, we have the following short exact sequence $$0\to N_{i+1}\to A^{\mu(N_i)}\to N_i\to 0.$$ After applying the functor $\Hom(-,\omega_A)$ and by (\cite{Bruns}, Theorem 3.3.10), we get $N^{\dagger}_{i+1}$ is a maximal Cohen-Macaulay $A$-module and $$0\to N^\dagger_i\to \omega_A^{\mu(N_i)}\to N^{\dagger}_{i+1}\to 0.$$ The result now follows from Theorem \ref{hil}.
\end{proof}

\begin{lemma}\label{contain}
     Let $(A,\m)$ be a local ring with an infinite residue field and $I$ a regular ideal of $A$. Let $x\in I\setminus I^2$ be a $M$-superficial element. Then 
     \begin{enumerate}
         \item For all $n\geq 1$, $$(r(I^n,M)+(x))/(x)\subseteq r(I^n,M/xM)=r((I^n+(x))/(x),M/xM).$$
         \item For all $n\geq 1$, $$\dfrac{(r(I^n,M)+(x))/(x)}{(\wt{I^n}+(x))/(x)}\cong \dfrac{r(I^n,M)}{\wt{I^n}+xr(I^{n-1},M)}.$$
     \end{enumerate}
\end{lemma}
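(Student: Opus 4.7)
The plan is to handle the two parts independently. For (1), I start with an element $\alpha \in r(I^n, M)$: by stabilization of the defining chain, $\alpha I^k M \subseteq I^{n+k+1} M$ for some large $k$, and reducing modulo $xM$ yields $\alpha I^k (M/xM) \subseteq I^{n+k+1}(M/xM)$, which shows $\alpha + (x) \in r(I^n, M/xM)$. Since $x$ annihilates $M/xM$, one has $(x) \subseteq r(I^n, M/xM)$ trivially, so the containment descends to $A/(x)$. The equality $r(I^n, M/xM) = r((I^n+(x))/(x), M/xM)$ then follows because $I^k (M/xM) = \bar{I}^k (M/xM)$, making the two ascending chains that define these ideals coincide under the standard identification of ideals of $A$ containing $(x)$ with ideals of $A/(x)$.

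For (2), the plan is to apply the second isomorphism theorem to rewrite the left-hand side as $r(I^n, M) \big/ \bigl(r(I^n, M) \cap (\wt{I^n} + (x))\bigr)$, which reduces the claim to the equality
\[
r(I^n, M) \cap \bigl(\wt{I^n} + (x)\bigr) = \wt{I^n} + x\, r(I^{n-1}, M).
\]
The $\supseteq$ direction is routine: $\wt{I^n} \subseteq r(I^n, M)$ (if $\beta I^k \subseteq I^{n+k}$ then $\beta I^{k+1} M \subseteq I^{n+k+1} M$), and $x\, r(I^{n-1}, M) \subseteq r(I^n, M)$ since $x \in I$. For $\subseteq$, write $\alpha = \beta + x\gamma \in r(I^n, M)$ with $\beta \in \wt{I^n}$ and $\gamma \in A$. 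Then $x\gamma = \alpha - \beta \in r(I^n, M)$, so $x\gamma I^k M \subseteq I^{n+k+1} M$ for large $k$. Hence for each $m \in I^k M$, $\gamma m \in (I^{n+k+1} M :_M x)$. The $M$-superficial property of $x$ provides a constant $c$ with $(I^{n+k+1} M :_M x) \cap I^c M = I^{n+k} M$ whenever $n+k \geq c$; since $\gamma m \in I^k M \subseteq I^c M$ once $k \geq c$, we obtain $\gamma I^k M \subseteq I^{n+k} M$, placing $\gamma$ in $r(I^{n-1}, M)$ by the formula $r(I^{n-1}, M) = \bigcup_k (I^{n+k} M :_A I^k M)$.

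The main obstacle is the $\subseteq$ inclusion above, where the superficial property does the essential work of trading a factor of $x$ for a factor of $I$. The delicate point is choosing $k$ large enough to satisfy simultaneously the stabilization of the defining chain of $r(I^n, M)$, the superficial condition $n + k \geq c$, and the inclusion $I^k M \subseteq I^c M$. The remaining steps are formal manipulation via the isomorphism theorems together with direct verification from the definitions; no algebraic input beyond the $M$-superficiality of $x$ is required (in particular, $M$-regularity of $x$ is not needed, because the superficial condition is intersected with $I^c M$).
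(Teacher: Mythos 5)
Your argument is correct, and its skeleton is the same as the paper's: part (1) is the same colon-ideal computation modulo $xM$, and in part (2) you make the same isomorphism-theorem reduction of the left-hand side to $r(I^n,M)\big/\bigl(r(I^n,M)\cap(\wt{I^n}+(x))\bigr)$. The one genuine difference lies in the key identity $r(I^n,M)\cap(\wt{I^n}+(x))=\wt{I^n}+x\,r(I^{n-1},M)$: the paper gets it from the modular law together with a citation of Proposition 8.2 of \cite{Zulfeqarr}, which supplies $(x)\cap r(I^{n},M)=x\,r(I^{n-1},M)$ for a superficial element, whereas you prove exactly this containment directly from the definition of superficiality, using $(I^{m+1}M:_M x)\cap I^cM=I^mM$ for $m\geq c$ to trade the factor $x$ for one power of $I$. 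This makes the lemma self-contained at the price of redoing a known computation, while the paper's route is shorter but leans on the external result. A minor point: your exponents follow the introduction's convention $r(I^n,M)=\bigcup_k(I^{n+k+1}M:I^kM)$ rather than Section 2's $\bigcup_k(I^{n+k}M:I^kM)$; applied consistently, as you do, the off-by-one is harmless, since under either convention the superficial step places $\gamma$ in a colon ideal contained in $r(I^{n-1},M)$.
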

\begin{proof}
(1) Choose $t\gg0$ such that $r(I^n,M)=(I^{n+t}M:I^tM)$ and $r(I^n,M/xM)=(I^{n+t}(M/xM):I^t(M/xM)).$ Let $a+(x)\in (r(I^n,M)+(x))/(x),$ where $a\in r(I^n,M).$ 
\begin{equation*}
    \begin{split}
        (a+(x))I^t(M/xM)&= aI^t(M/xM)+(x)I^t(M/xM)\\&= aI^t(M/xM)\\&=(aI^tM+xM)/xM\\&\subseteq (I^{n+t}M+xM)/xM\\&=I^{n+t}(M/xM).
    \end{split}
\end{equation*}
Hence $a+(x)\in r(I^n,M/xM).$

 Next, choose $t\gg0$ such that $$ r((I^n+(x))/(x),M/xM)=((I^{n+t}+(x)/(x))M/xM:(I^{t}+(x)/(x))M/xM)$$ and $$r(I^n,M/xM)=(I^{n+t}(M/xM):I^t(M/xM)).$$ It follows that
\begin{equation*}
\begin{split}
    r((I^n+(x))/(x),M/xM)&=((I^{n+t}+(x)/(x))M/xM:(I^{t}+(x)/(x))M/xM)\\&=((I^{n+t}M+xM)/xM:(I^{t}M+xM)/xM)\\&=(I^{n+t}(M/xM):I^t(M/xM))\\&=r(I^n,M/xM).
    \end{split}
\end{equation*}

(2) \begin{equation*}
         \begin{split}
             \dfrac{(r(I^n,M)+(x))/(x)}{(\wt{I^n}+(x))/(x)}& \cong (r(I^n,M)+(x))/(\wt{I^n}+(x))\\&\cong r(I^n,M)/(\wt{I^n}+(x))\cap r(I^n,M)\\&=r(I^n,M)/\wt{I^n}+((x)\cap r(I^{n},M))\\&=r(I^n,M)/\wt{I^n}+xr(I^{n-1},M).
         \end{split}
     \end{equation*}
     The last equality is because of (\cite{Zulfeqarr}, Proposition 8.2). 
\end{proof}
\begin{theorem} \label{modx}
     Let $(A,\m)$ be a Cohen-Macaulay local ring of dimension $d\geq 2$ with infinite residue field. Suppose $\omega_A$ is a canonical $A$-module. Let $x\in \m\setminus\m^2$ be an $A\bigoplus \omega_A$-superficial element. If $A$ is quasi-Hilbert, then so is $A/(x)$.    
\end{theorem}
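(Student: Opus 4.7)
Set $\bar A=A/(x)$, $\bar\m=\m/(x)$, and $\bar\omega=\omega_A/x\omega_A$; since $x$ is $\omega_A$-superficial and $\omega_A$ is maximal Cohen--Macaulay, $x$ is $\omega_A$-regular, so by (\cite{Bruns}, Theorem 3.3.5) $\bar\omega$ is a canonical module for the Cohen--Macaulay ring $\bar A$ of dimension $d-1\geq 1$. Write $F_n=r(\m^n,\omega_A)/\wt{\m^n}$ and $\bar F_n=r(\bar\m^n,\bar\omega)/\wt{\bar\m^n}$, and set $E=\bigoplus F_n$, $\bar E=\bigoplus \bar F_n$. The analogue of Lemma \ref{up} applied to $\bar A$ already gives $\dim\bar E\leq d-1$; the plan is to construct a graded submodule of $\bar E$ of dimension $d-1$, which will force equality.

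The first step is to study the degree-one multiplication map $x:F_{n-1}\to F_n$. A coset $\alpha+\wt{\m^{n-1}}$ is killed iff $x\alpha\in\wt{\m^n}$. Since $\grade(\m,A)>0$, Remark \ref{rmk:tildeI^nM}(1) gives $\wt{\m^n}=\m^n$ for $n\gg 0$; since $x$ is $A$-superficial, $\m^n:x=\m^{n-1}$ for $n\gg 0$. Combining these forces $\alpha\in\m^{n-1}=\wt{\m^{n-1}}$, so $x:F_{n-1}\to F_n$ is injective in high degree. Setting $C_n:=F_n/xF_{n-1}$, this yields $\ell(C_n)=\ell(F_n)-\ell(F_{n-1})$ for $n\gg 0$. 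By Theorem \ref{Thm:main}(a) together with the quasi-Hilbert hypothesis on $A$, $\ell(F_n)$ is a polynomial in $n$ of degree $d-1$, so $\ell(C_n)$ is a polynomial of degree $d-2$.

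The second step is to embed $C_n$ into $\bar F_n$ for large $n$. Lemma \ref{contain}(2) identifies $C_n$ with $(r(\m^n,\omega_A)+(x))/(\wt{\m^n}+(x))$, and Lemma \ref{contain}(1) together with the straightforward containment $(\wt{\m^n}+(x))/(x)\subseteq\wt{\bar\m^n}$ shows that the rule $\alpha+(\wt{\m^n}+(x))\mapsto\bar\alpha+\wt{\bar\m^n}$ defines a well-defined graded $\R^\m$-morphism $\psi_n:C_n\to\bar F_n$. Applying Remark \ref{rmk:tildeI^nM}(1) to $\bar A$ (valid because $d-1\geq 1$) gives $\wt{\bar\m^n}=\bar\m^n$ for $n\gg 0$; combined with $\wt{\m^n}=\m^n$ eventually, the kernel of $\psi_n$ collapses to $(\m^n+(x))/(\wt{\m^n}+(x))=0$, so $\psi_n$ is injective for large $n$.

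Putting the two steps together, the image of $\psi:=\bigoplus\psi_n$ is a graded submodule of $\bar E$ whose Hilbert function agrees with $\ell(C_n)$ in high degree and is therefore polynomial of degree $d-2$. Hence $\dim\bar E\geq d-1$, so $\dim\bar E=d-1=\dim\bar A$; in particular $\bar E\neq 0$, and Remark \ref{remark}(d) then rules out $\bar A$ being Gorenstein, so every defining condition of a quasi-Hilbert ring holds for $\bar A$. The main obstacle is the injectivity of $\psi_n$ in the second step, which hinges on the simultaneous eventual stabilization of the Ratliff--Rush filtrations $\{\wt{\m^n}\}$ and $\{\wt{\bar\m^n}\}$; the stabilization for $\bar A$ is precisely where the hypothesis $d\geq 2$ enters, ensuring $\bar A$ has positive depth.
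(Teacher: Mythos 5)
Your argument is correct and follows essentially the same route as the paper: reduce modulo $x$ via Lemma \ref{contain}, compare the resulting quotients with $r(\bar{\m}^n,\omega_{A/x})/\wt{\bar{\m}^n}$, and use the eventual equalities $\wt{\m^n}=\m^n$ and $\wt{\bar{\m}^n}=\bar{\m}^n$ (the latter being exactly where $d\geq 2$ enters). The only difference is cosmetic: you obtain the degree drop by high-degree injectivity of multiplication by $x$ (from superficiality) and Hilbert-function bookkeeping, whereas the paper gets the same information from the exact sequence $0\to E\xrightarrow{\,xt\,}E(+1)\to C\to 0$ supplied by \cite{Zulfeqarr} together with a dimension count.
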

\begin{proof}
    Note that the Jacobson ideal $J=\m$ and $$E=\bigoplus_{n\geq 0}\dfrac{r(\m^n,\omega_A)}{\wt{\m^n}}$$ is a finitely generated $\R^{\m}$-module. By (\cite{Zulfeqarr}, Proposition 8.2), for each $n\geq 1$ we have a  short exact sequence $$0\to \dfrac{r(\m^n,\omega_A)}{\wt{\m^n}}\xrightarrow{x}\dfrac{r(\m^{n+1},\omega_A)}{\wt{\m^{n+1}}}\to \dfrac{r(\m^{n+1},\omega_A)}{xr(\m^{n},\omega_A)+\wt{\m^{n+1}}}\to 0.$$ Passing to the direct sum over all $n\geq 0$ we obtain a short exact sequence of $\R^{\m}$-modules $$0\to E\xrightarrow{xt}E(+1)\to C=\bigoplus_{n\geq 0}\dfrac{r(\m^{n+1},\omega_A)}{xr(\m^{n},\omega_A)+\wt{\m^{n+1}}}\to 0.$$ Since $A$ is a quasi-Hilbert ring, the dimension of $E$ is $d$. Furthermore, $xt$ is a non-zero divisor on $E$ and hence the dimension of $C$ is $d-1.$ By Lemma \ref{contain}, for all $n\geq 1$,
    \begin{equation}\label{eqq}
        \begin{split}
            \dfrac{r(\m^n,\omega_A)}{\wt{\m^n}+xr(\m^{n-1},\omega_A)}&=\dfrac{(r(\m^n,\omega_A)+(x))/(x)}{(\wt{\m^n}+(x))/(x)}\\&\subseteq \dfrac{r((\m^n+(x))/(x),w_{A/x})}{(\wt{\m^n}+(x))/(x)}. 
        \end{split}
    \end{equation}
 Set $$D=\bigoplus_{n\geq 0}\dfrac{r((\m^n+(x))/(x),w_{A/x})}{\wt{\m^n}+(x)/(x)}.$$  Since $w_{A/x}$ is a faithful $A/x$-module and $\m/x$ is a regular ideal, the module $D$ is a finitely generated $\R^{\bar{\m}}$-module, where $\bar{\m}=(\m+(x))/(x).$ From $\ref{eqq}$, we can conclude that the dimension of the $\R^{\bar{\m}}$-module $D$ is $d-1.$

    Next, observe that for all $n\geq 1$, $$\dfrac{\wt{\m^n}+(x)}{(x)}\subseteq \wt{\bar{\m^n}}.$$ This inclusion yields a short exact sequence of $\R^{\bar{\m}}$-modules. $$0\to \bigoplus_{n\geq 0}\dfrac{\wt{\bar{\m^n}}}{\wt{\m^n}+(x)/(x)}\to \bigoplus_{n\geq 0 }\dfrac{r({\bar{\m}^n},w_{A/x})}{\wt{\m^n}+(x)/(x)}\to \bigoplus_{n\geq 0 }\dfrac{r(\bar{\m^n},w_{A/x})}{\wt{\bar{\m^n}}}\to 0.$$ Since $d\geq 2$ and $\wt{\bar{\m^n}}=\wt{\m^n}+(x)/(x)$ for all $n\gg0$, we get the dimension of $\R^{\bar{\m}}$-module $\bigoplus_{n\geq 0 }\dfrac{r(\bar{\m^n},w_{A/x})}{\wt{\bar{\m^n}}}$ is $d-1$. Therefore, $A/(x)$ is a quasi-Hilbert ring. 
\end{proof}


\begin{theorem}\label{hat}
      Let $(A,\m)$ be a Cohen-Macaulay local ring of dimension $d\geq 1$. Suppose $\omega_A$ is a canonical $A$-module. $A$ is quasi-Hilbert iff $\hat{A}$ is quasi-Hilbert, where $\hat{A}$ is the completion of $A$ with respect to $\m$.
\end{theorem}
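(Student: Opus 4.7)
The plan is to exploit the flatness of $A\to\hat{A}$ to show that the Ratliff--Rush operations commute with completion, giving $E\otimes_A\hat{A}\cong\hat{E}$, and then to compare Krull dimensions via the common Hilbert polynomial.

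First, since $(A,\m)$ is Cohen--Macaulay local, $\omega_{\hat{A}}\cong\omega_A\otimes_A\hat{A}$ is a canonical module for $\hat{A}$ (\cite{Bruns}, Theorem 3.3.5). Writing
\[
r(\m^n,\omega_A) = (\m^{n+k+1}\omega_A :_A \m^k\omega_A) = \ann_A\!\left(\m^k\omega_A/\m^{n+k+1}\omega_A\right)
\]
for $k$ in the stable range of the ascending chain in \ref{rit}, faithful flatness of $A\to\hat{A}$ together with the commutation of annihilators with flat base change for finitely presented modules yields
\[
r(\m^n,\omega_A)\hat{A} = \ann_{\hat{A}}\!\left(\hat{\m}^k\omega_{\hat{A}}/\hat{\m}^{n+k+1}\omega_{\hat{A}}\right) = r(\hat{\m}^n,\omega_{\hat{A}}),
\]
after choosing $k$ large enough to lie in the stable range for both $A$ and $\hat{A}$ simultaneously (possible by Noetherianity). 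The identical argument with $A$ in place of $\omega_A$ gives $\wt{\m^n}\hat{A} = \wt{\hat{\m}^n}$.

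Tensoring the short exact sequence $0\to\wt{\R}^{\m}\to S\to E\to 0$ over the flat $A$-algebra $\hat{A}$ and invoking the two identities above produces an isomorphism $E\otimes_A\hat{A}\cong\hat{S}/\wt{\hat{\R}}^{\hat{\m}}=:\hat{E}$ of graded $\hat{\R}^{\hat{\m}}$-modules, where $\hat{\R}^{\hat{\m}}=\R^{\m}\otimes_A\hat{A}$. Since each component $E_n=r(\m^n,\omega_A)/\wt{\m^n}$ is killed by $\m^n$, it has finite $A$-length and $\ell_A(E_n)=\ell_{\hat{A}}(\hat{E}_n)$ for every $n\ge 0$.

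By Theorem \ref{Thm:main}(a), both functions $n\mapsto\ell_A(E_n)$ and $n\mapsto\ell_{\hat{A}}(\hat{E}_n)$ are of polynomial type, and the above equality forces them to coincide with a single polynomial $P$. Moreover, $E$ is a finitely generated graded $\R^{\m}$-module whose finitely many homogeneous generators are each killed by a power of $\m$, so some fixed $c$ satisfies $\m^c E=0$; hence $E$ is a finitely generated graded module over $\R^{\m}/\m^c\R^{\m}$, whose degree-zero part $A/\m^c$ is Artinian. Standard Hilbert-polynomial theory (\cite{Bruns}, Theorem 4.1.3) then gives $\dim_{\R^{\m}}E = 1+\deg P$, and by the identical argument $\dim_{\hat{\R}^{\hat{\m}}}\hat{E} = 1+\deg P$. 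Therefore $\dim E=d$ iff $\dim\hat{E}=d$, i.e., $A$ is quasi-Hilbert iff $\hat{A}$ is. The one delicate point is the uniform choice of the stabilization index $k$ in the colon identities; once that is handled, the argument reduces to a direct comparison of Hilbert polynomials.
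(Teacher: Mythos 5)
Your proposal is correct and follows essentially the same route as the paper: base change along the faithfully flat map $A\to\hat{A}$ commutes with $r(\m^n,-)$ and with Ratliff--Rush closure, so $E\otimes_A\hat{A}\cong\hat{E}$ componentwise with equal lengths, and the quasi-Hilbert property is read off from the growth of $n\mapsto\ell(E_n)$. The only differences are cosmetic: you prove the colon/annihilator base-change identity directly (where the paper cites \cite{Zulfeqarr}, Proposition 1.4(d)) and you spell out the Hilbert-polynomial-to-dimension step that the paper leaves implicit.
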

\begin{proof}
Note that the Jacobson ideal $J=\m$. Since the map $A \to \widehat{A}$ is a flat local homomorphism and $\mathfrak m \widehat{A}$ is the maximal ideal of $\widehat{A}$, it follows from Proposition~3.3.14 of \cite{Bruns} that

$$\omega_A \otimes_A \widehat{A}$$

is a canonical $\widehat{A}$-module.

Moreover, by \cite{Zulfeqarr}, Proposition~1.4(d), we have an isomorphism

$$r(\mathfrak m^{n}, \omega_A)\otimes_A \widehat{A}
\;\cong\;
r(\mathfrak m^{n}\widehat{A},\, \omega_A\otimes_A \widehat{A}).$$

By \cite{Bruns}, Corollary~2.1.8, $A$ is a Cohen-Macaulay local ring of dimension $d\geq 1$ iff $\widehat{A}$ is a Cohen-Macaulay local ring of dimension $d \geq 1$. Hence, the filtration $\F^{\m}_{\omega_A}$ is $\m$-stable iff the filtration $$\F^{\m\hat{A}}_{\omega_A\otimes \hat{A}}=\{\, r(\mathfrak m^{n}\widehat{A},\, \omega_A\otimes_A \widehat{A}) \,\}_{n\ge 0}$$
forms an $\mathfrak m\widehat{A}$-stable filtration.

We also observe that
\[
\frac{\, r(\mathfrak m^{n}\widehat{A},\, \omega_A\otimes_A \widehat{A}) \,}
     {\, \widetilde{\mathfrak m^{n}\widehat{A}}\,}
\;\cong\;
\frac{\, r(\mathfrak m^{n}, \omega_A) \,}{\, \widetilde{\mathfrak m^{n}}\,}
     \otimes_A \widehat{A}.
\]
Since the length of an $A$-module is preserved under faithfully flat extension (see \cite{Part1}, 1.4), it follows that $A$ is quasi-Hilbert iff $\hat{A}$ is quasi-Hilbert

\end{proof}

\section{Vanishing of $r(I^n,M)/\wt{I^n}$ for all $n\geq 1$}

In this section, we present sufficient conditions under which the equality $r(I^n,M)=\wt{I^n}$ holds for all 
$n\geq 1$ where $I$ is a regular ideal and $M$ is a maximal Cohen–Macaulay $A$-module.

\begin{theorem}\label{vanish}
    Let $(A,\m)$ be a Cohen-Macaulay local ring of dimension $d\geq 1$ and $M$ be a maximal Cohen-Macaulay $A$-module with $\operatorname{ann}(M)=0$. Let $I$ be a regular radical ideal of $A.$ If $\wt{G}_I(A)$ is unmixed then the following holds.
    \begin{enumerate}
        \item $E=\bigoplus_{n\geq 1}r(I^n,M)/\wt{I^n}$ has dimension equal to $d$ or $E=0$ as $\R^I$ as well as $\wt\R^I$-module.
        \item Suppose $E\neq 0.$ Let $\q$ be a prime ideal of $A$ containing $I.$ Then the $\dim(E_{\q})=\operatorname{ht}(\q)$ as a module over $\widehat{\wt{\R}^{I_{\q}}}=\bigoplus_{n\in \Z}\wt{I^n_{\q}}t^n,$ where $\wt{I^n_{\q}}=A_{\q}$ for all $n\leq 0.$ 
        \item If $M_{\q}$ is free for some prime $\q$ containing $I$ then $r(I^n,M)=\wt{I^n}$ for all $n\geq 1.$ 
    \end{enumerate}
\end{theorem}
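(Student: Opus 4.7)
The plan is to mirror the strategy of Lemma \ref{up} and Theorem \ref{Thm:main}(b): control the associated primes of $E$ in terms of those of $\wt{G}_I(A)$ and then invoke the unmixedness hypothesis. By Lemma \ref{Thm:Istable} (applied with $\operatorname{ann}(M)=0$ to both $M$ and $A$), both $\R(\F^I_M)$ and $\wt{\R}^I$ are finitely generated $\R^I$-modules, so $E$ is a finitely generated $\R^I$-module, and it is also a finitely generated $\wt{\R}^I$-module since $\R^I\subseteq\wt{\R}^I$.

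For part (1), the observation $\wt{I^n}\subseteq r(I^n,M)$ (any $a$ with $aI^k\subseteq I^{n+k}$ automatically satisfies $aI^kM\subseteq I^{n+k}M$) yields a graded injection
\[
E \; \hookrightarrow \; \bigoplus_{n\geq 1} A/\wt{I^n},
\]
which is a shift of $\wt{L}^I(A)$. Consequently $\operatorname{Ass}_{\R^I}(E)\subseteq \operatorname{Ass}_{\R^I}(\wt{L}^I(A))$, which by Lemma 3.3 of \cite{samunmixed} equals $\operatorname{Ass}_{\R^I}(\wt{G}_I(A))$. Since $I$ has positive grade and $A$ is Cohen--Macaulay of dimension $d$, $\wt{G}_I(A)$ is equidimensional of dimension $d$ by Proposition \ref{UE}(1); the unmixedness assumption then forces every associated prime of $E$ to have $\dim\R^I/\p=d$. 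Hence $\dim_{\R^I}E=d$ whenever $E\neq 0$, and the same argument applies verbatim to the $\wt{\R}^I$-module structure.

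For part (2), I would localize everything at a prime $\q\supseteq I$. The hypotheses descend: $A_\q$ is Cohen--Macaulay of dimension $\operatorname{ht}(\q)$, $M_\q$ is a maximal Cohen--Macaulay module with vanishing annihilator, $I_\q$ is a regular radical ideal, and $\wt{G}_{I_\q}(A_\q)$ inherits unmixedness from $\wt{G}_I(A)$ via the standard correspondence between graded primes of the Rees ring and primes of $A$. The passage to the extended Rees algebra $\widehat{\wt{\R}^{I_\q}}=\bigoplus_{n\in\Z}\wt{I_\q^n}t^n$ is made so as to work in a $*$--local graded ring of dimension $\operatorname{ht}(\q)+1$; applying the associated-primes argument of part (1) in this setting gives $\dim E_\q=\operatorname{ht}(\q)$ whenever $E_\q\neq 0$, and under the assumption $E\neq 0$ of part (2) one must also verify $E_\q\neq 0$ for every such $\q$.

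Part (3) is then a short deduction. If $M_\q$ is free over $A_\q$, say $M_\q\cong A_\q^r$, Proposition 1.4 of \cite{Zulfeqarr} yields $r(I_\q^n,M_\q)=r(I_\q^n,A_\q^r)=r(I_\q^n,A_\q)=\wt{I_\q^n}$ for all $n$, so $E_\q=0$. As $I$ is regular, $\operatorname{ht}(\q)\geq 1$; were $E\neq 0$, part (2) would force $\dim E_\q=\operatorname{ht}(\q)\geq 1$, contradicting $E_\q=0$. Therefore $E=0$, i.e.\ $r(I^n,M)=\wt{I^n}$ for all $n\geq 1$. The main technical obstacle is in part (2): ensuring that the unmixedness of $\wt{G}_I(A)$ survives localization at $\q$ (producing full-dimensional associated primes rather than a drop in dimension) and that passage to the extended Rees ring preserves the required dimension equality — it is precisely this that upgrades the easy inequality $\dim E_\q\leq\operatorname{ht}(\q)$ to the equality needed for the contradiction in part (3).
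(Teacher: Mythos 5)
Your part (1) follows essentially the same route as the paper (which defers to the proof of Theorem 3.5 of \cite{samunmixed}): embed $E$ degreewise into $\bigoplus_{n} A/\wt{I^{n}}$, bound $\Ass_{\R^I}(E)$ by $\Ass_{\R^I}(\wt{G}_I(A))$, and use unmixedness plus equidimensionality; that part is fine, and your deduction of (3) from (2) is also fine. The genuine gap is part (2), which you only outline and whose two decisive points you explicitly leave unresolved: (i) that $E\neq 0$ forces $E_\q\neq 0$ for \emph{every} prime $\q\supseteq I$, and (ii) that $\dim(E_\q)=\operatorname{ht}(\q)$ over $\widehat{\wt{\R}^{I_\q}}$. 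Neither follows from ``localize the hypotheses and reapply (1)'': even granting that Ratliff--Rush commutes with localization, unmixedness of $\wt{G}_{I_\q}(A_\q)$ is not a formal consequence of unmixedness of $\wt{G}_I(A)$ (one needs a catenarity/height-formula argument relating the graded primes lying over $\q$ to $\operatorname{ht}(\q)$), and in any case rerunning the argument of (1) locally would only give the dichotomy ``$E_\q=0$ or $\dim E_\q=\operatorname{ht}(\q)$'', which is useless for (3), since (3) needs precisely the nonvanishing of $E_\q$ at the particular prime where $M_\q$ is free. A telltale sign of the missing idea is that your argument never uses the hypothesis that $I$ is a \emph{radical} ideal.

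The paper's proof of (2) is different in substance: it works globally with the extended Rees algebra $\widehat{\wt{\R}^I}$, observes that $(t^{-n})$ annihilates $E$ for $n\gg 0$, analyzes primary decompositions of $(t^{-n})$, uses $\wt{G}_I(A)\cong\widehat{\wt{\R}^I}/(t^{-1})$ together with part (1) to produce a height-one prime $\p_1\in\operatorname{Supp}_{\widehat{\wt{\R}^I}}(E)$, and then proves the key claim that $\p_1$ survives localization at $\q$, i.e.\ $(\p_1)_\q\neq 0$. It is exactly here that radicality of $I$ enters, via the step $s^m\in(\wt{I^{n+1}}:\wt{I^{n}})\subseteq\wt{I}=I\subseteq\q$, which produces the contradiction; a graded analogue of the height formula in Matsumura then converts the surviving height-one support prime into the equality $\dim(E_\q)=\operatorname{ht}(\q)$, and in particular gives $E_\q\neq 0$. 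Your proposal would need this survival-of-support argument (or an actual proof that unmixedness localizes with the correct dimensions) to become a proof; as written, it stops at naming the obstacle rather than overcoming it.
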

\begin{proof}
   (1) Since $I$ is a regular ideal and ann$(M)=0$ the filtrations $\F^I_M$ and $\{\wt{I^n}\}_{n\geq 0}$ are $I$-stable. It follows that $E$ is a finitely generated $\R^I$-module. Note that $\wt{G}_I(A)$ is unmixed and equidimensional (see \ref{UE}). Then by the proof of Theorem 3.5 of \cite{samunmixed}, $\dim(E)=d$ as an $\R^I$-module. Also, note that $\R(\F^I_M)$ is a graded $\wt\R^I=\bigoplus_{n\geq 0}\wt{I^n}t^n$-module. From the containment $$\R^I\subseteq \wt\R^I\subseteq\R(\F^I_M),$$ it follows that $E$ is a finitely generated $\wt\R^I$-module. Since $\wt{G}_I(A)$ is unmixed and equidimensional, by the proof of Lemma 3.3 and Theorem 3.5 of \cite{samunmixed}, we obtain that the dimension of $E$ is $d$ as an $\wt\R^I$-module.

   (2) Note that $I^n\subseteq \wt{I^n}\subseteq r(I^n,M)\subseteq \overline{I}=I,$ where $\overline{I}$ is the integral closure of $I.$ The last equality due to $I$ is a radical ideal, and the second last inequality is due to Proposition 4.2 of \cite{Zulfeqarr}. Therefore, they all have the same minimal primes. Suppose $E\neq 0.$ This implies that $E_{\q}\neq 0$ for all prime ideal $\q$ containing $I.$
   
   Let $\q$ be a  prime ideal containing $I$. This implies $I_{\q}$ is a regular ideal of $A_{\q}$. We also have $\text{ann}_{A_{\q}}(M_{\q})=(\text{ann}_A(M))_{\q}=0.$ Hence the filtration $\F^{I_{\q}}_{M_{\q}}$ is $I_{\q}$-stable. It follows that $E_{\q}$ is a finitely generated $\wt{\R}(I_{\q})$-module. 
   

Note that
$$\widetilde{I^n}\, r(I^m,M)\subseteq r(I^{n+m},M)
\quad \text{for all } n\in \mathbb{Z} \text{ and } m\ge 0.$$
It follows that $E$ is a graded $\widehat{\widetilde{\mathcal R}^I}=\bigoplus_{n\in \Z}\wt{I^n}t^n$-module.

Since $E$ is a finitely generated $\widetilde{\mathcal R}^I$-module, it follows that
for all $n\gg 0$, the ideal $(t^{-n})$ lies in the annihilator of $E$ as a
$\widehat{\widetilde{\mathcal R}^I}$-module. Consequently, $E$ is a module over
$\widehat{\widetilde{\mathcal R}^I}/(t^{-n})$ for all $n\gg 0$.

Let $\mathfrak p_1,\ldots,\mathfrak p_s$ be the minimal primes of $(t^{-1})$.
For $n\gg 0$, let
$$(t^{-n})
=
Q_{1,n}\cap\cdots\cap Q_{s,n}\cap L_{1,n}\cap\cdots\cap L_{t,n}$$
be a primary decomposition of $(t^{-n})$, where each $Q_{i,n}$ is $\mathfrak p_i$-primary.

\medskip
\noindent
\textbf{Claim.}
For all $n\gg 0$, we have $Q_{i,n}\neq \mathfrak p_i$ for every $i=1,\ldots,s$.

\medskip
\noindent
\textit{Proof of the claim.}
Suppose that $Q_{i,n}=\mathfrak p_i$ for some $i$ and for infinitely many
values of $n$. Let $\Delta$ be the index set such that
$$Q_{i,n}=\mathfrak p_i \quad \text{for all } n\in \Delta.$$
Then
$$\mathfrak p_i
=
Q_{i,n}
=
(t^{-n})(\widehat{\widetilde{\mathcal R}})_{\mathfrak p_i}\cap
\widehat{\widetilde{\mathcal R}^I}
\quad \text{for all } n\in \Delta.$$
Localizing at $\mathfrak p_i$, we obtain
$$(\mathfrak p_i)_{\mathfrak p_i}
=
(t^{-n})(\widehat{\widetilde{\mathcal R}^I})_{\mathfrak p_i}
\quad \text{for all } n\in \Delta.$$
Hence,
$$(\mathfrak p_i)_{\mathfrak p_i}
=
\bigcap_{n\in \Delta}
(t^{-n})(\widehat{\widetilde{\mathcal R}^I})_{\mathfrak p_i}
=
0.$$
This implies that $\mathfrak p_i=0$, which contradicts the fact that $\mathfrak p_i\neq 0$. Therefore, the claim follows.

{Since $\wt{G}_I(A)\cong \widehat{\wt{\R}^I}/(t^{-1})$ and from the surjective map $$\widehat{\wt{\R}^I}/(t^{-n})\to \widehat{\wt{\R}^I}/(t^{-1}),$$ we obtain the dimension of $E$ is $d=\dim(\widehat{\wt\R^I}/(t^{-n}))$ as a $\widehat{\wt\R^I}$-module. Hence there exists $\p_1$(say) such that $\p_1\in \text{Supp}_{\widehat{\wt\R^I}}(E).$ Note that height of ${\p_1}$ is one.}

\medskip
\noindent
\textbf{Claim.} ${{\mathfrak p_1}}_{\mathfrak q}\neq 0$.

{If the claim holds, then by a graded analogue of \cite[Lemma 2, p.\ 250]{matsumura}, we have $\text{height}({\mathfrak{M_{\q}}}/{{\p_1}}_{\q}) = \text{height}\ {\mathfrak{M_{\q}}} - \text{height }  {\p_1}_{\q} = \text{ht}(\q)+1 -1 = \text{ht}(\q)$, where $\mathfrak{M_{\q}}$ is the graded maximal ideal of $\hat{\wt\R}(I_{\q})$. This implies that the $\dim(E_{\q})=\text{ht}(\q)$ as a module over $\hat{\wt{\R}}(I_{\q}).$}

\medskip
\noindent
\textit{Proof of the claim.}
Suppose, to the contrary, that $\bar{{\mathfrak p_1}}_{\mathfrak p}=0$.
Then there exists $s\notin \mathfrak p$ such that
$s\bar{\mathfrak p_1}=\bar{0}=(t^{-n})=Q_{1,n}\cap\cdots\cap Q_{s,n}\cap L_{1,n}\cap\cdots\cap L_{t,n}$.
Note that for $i\ge 2$, we have
$\mathfrak p_i=\sqrt{Q_{i,n}}\not\supseteq \mathfrak p_1$.
Since $Q_{i,n}$ is $\mathfrak p_i$-primary, it follows that
$s\in \mathfrak p_i$ for all $i\ge 2$.

Choose $\alpha\in \mathfrak p_1\setminus Q_{1,n}$ such that
$s\alpha\in Q_{1,n}$.
Since $Q_{1,n}$ is $\mathfrak p_1$-primary, we conclude that
$s\in \mathfrak p_1$.
Hence,
$s\in \mathfrak p_1\cap\cdots\cap \mathfrak p_s=\sqrt{(t^{-1})}$.
Therefore, there exists $m\in \mathbb{Z}$ such that
$s^m\in (t^{-1})$.

Now we have a surjective homomorphism
$$\widehat{\widetilde{\mathcal R}^I}/(s^m)\to
\widehat{\widetilde{\mathcal R}^I}/(t^{-1}).$$
Since degree of $s$ is zero, by focusing on $n$-th component of the homomorphism, we obtain
$s^m\widetilde{I^{n}}\subseteq \widetilde{I^{n+1}}$.
Consequently,
$s^m\in (\widetilde{I^{n+1}}:\widetilde{I^{n}})\subseteq \widetilde{I}=I$,
where the last equality holds since $I$ is a radical ideal.
Thus,
$s^m\in I\subseteq \mathfrak p$, and hence $s\in \mathfrak p$,
which is a contradiction.
This proves the claim.

(3) Suppose $E\neq 0.$ By part $(2)$, dimension of $E_{q}=\text{ht}(q)\geq 1.$ By the hypothesis, $E_{\q}=\bigoplus_{n\geq 0}r(I^n_{\q},M_{\q})/\wt{I^n}_{\q}=0$. Which is a contradiction to part $(2).$ This proves the result.

\end{proof}

\begin{theorem}\label{v2}
    (with the same hypothesis of Theorem \ref{vanish}) If $G_I(A)$ is unmixed then the following holds.
    \begin{enumerate}
        \item $E=\bigoplus_{n\geq 1}r(I^n,M)/\wt{I^n}$ has dimension equal to $d$ or $E=0$ as $\R^I$-module.
        \item Suppose $E\neq 0.$ Let $\q$ be a prime ideal of $A$ containing $I.$ Then the $\dim(E_{\q})=\operatorname{ht}(\q)$ as a module over $\widehat{{\R}^{I_{\q}}}=\bigoplus_{n\in \Z}{I^n_{\q}}t^n,$ where $\wt{I^n_{\q}}=A_{\q}$ for all $n\leq 0.$ 
        \item If $M_{\q}$ is free for some prime $\q$ containing $I$ then $r(I^n,M)=\wt{I^n}$ for all $n\geq 1.$ 
    \end{enumerate}
\end{theorem}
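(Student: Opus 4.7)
The plan is to reduce Theorem~\ref{v2} directly to Theorem~\ref{vanish} by showing that the hypothesis $G_I(A)$ unmixed is in fact \emph{stronger} than $\wt{G}_I(A)$ unmixed in a precise sense: it forces $\wt{I^n}=I^n$ for every $n\geq 1$, so that $\wt{\R}^I=\R^I$ and $\wt{G}_I(A)=G_I(A)$. Once this identification is made, Theorem~\ref{vanish} applies verbatim.

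First, I would verify that $\grade(G_I(A)_+,G_I(A))>0$. Since $A$ is Cohen--Macaulay of dimension $d\geq 1$ and $I$ is a regular ideal, $\htt(I)\geq 1$ and therefore $\dim A/I\leq d-1$. Because $G_I(A)/G_I(A)_+\cong A/I$ while $\dim G_I(A)=d$, the ideal $G_I(A)_+$ has positive height. On the other hand, unmixedness of $G_I(A)$ says that every associated prime has dimension $d$, hence is a minimal prime of height $0$. Thus $G_I(A)_+$ is not contained in any associated prime and must contain a nonzerodivisor. By Remark~\ref{rmk:tildeI^nM}(2) this gives $\wt{I^n}=I^n$ for all $n\geq 1$, whence $\wt{\R}^I=\R^I$ and $\wt{G}_I(A)=G_I(A)$ is unmixed, so the hypotheses of Theorem~\ref{vanish} are satisfied.

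Parts (1) and (3) of the conclusion then follow immediately from the corresponding parts of Theorem~\ref{vanish}, since the $\wt{\R}^I$-module structure on $E$ coincides with its $\R^I$-module structure. For part (2), I localize the identity $\wt{I^n}=I^n$ at a prime $\q\supseteq I$: since the formation of Ratliff--Rush closures commutes with localization, $\wt{I_\q^{\,n}}=(\wt{I^n})_\q=(I^n)_\q=I_\q^{\,n}$ for every $n\geq 1$, and consequently $\widehat{\wt{\R}^{I_\q}}=\widehat{\R^{I_\q}}$. With this identification, Theorem~\ref{vanish}(2) is exactly the desired statement.

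The only non-formal step in this plan is the grade computation in the first paragraph, and I do not expect a real obstacle there: it only uses that $G_I(A)$ is a finitely generated graded algebra over $A/I$ whose irrelevant ideal has positive height (as $I$ is regular in a Cohen--Macaulay ring), combined with the defining property of unmixedness. Everything else is bookkeeping that turns Theorem~\ref{vanish} into Theorem~\ref{v2}.
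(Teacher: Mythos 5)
Your proposal is correct, but it proves the theorem by a different route than the paper: the paper simply re-runs the argument of Theorem \ref{vanish} with $\R^I$, $G_I(A)$ and $\widehat{\R^{I_\q}}$ playing the roles of $\wt{\R}^I$, $\wt{G}_I(A)$ and $\widehat{\wt{\R}^{I_\q}}$ (this is what ``the argument is similar'' means there), whereas you reduce Theorem \ref{v2} outright to Theorem \ref{vanish}. Your key observation is sound: since $A$ is Cohen--Macaulay local and $I$ is regular, $\dim A/I\le d-1$, while unmixedness of $G_I(A)$ forces every associated prime $P$ (in particular every minimal prime) to satisfy $\dim G_I(A)/P=d$; hence $G_I(A)_+$ cannot lie in any associated prime --- indeed, if $G_I(A)_+\subseteq P$ then $d=\dim G_I(A)/P\le \dim G_I(A)/G_I(A)_+=\dim A/I\le d-1$, a contradiction, which incidentally lets you bypass the intermediate ``positive height'' step (that step is also fine, since minimal primes are associated, but as literally phrased it leans on equidimensionality, i.e.\ Proposition \ref{UE}(1)). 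Prime avoidance then gives $\grade(G_I(A)_+,G_I(A))>0$, so Remark \ref{rmk:tildeI^nM}(2) yields $\wt{I^n}=I^n$ for all $n\ge 1$, whence $\wt{\R}^I=\R^I$, $\wt{G}_I(A)=G_I(A)$, and, because Ratliff--Rush closure commutes with localization, $\widehat{\wt{\R}^{I_\q}}=\widehat{\R^{I_\q}}$; all three parts of Theorem \ref{v2} are then verbatim the corresponding parts of Theorem \ref{vanish}. This buys a genuine simplification: it avoids repeating the dimension computations of Theorem \ref{vanish} and it exhibits the hypothesis ``$G_I(A)$ unmixed'' as strictly stronger than ``$\wt{G}_I(A)$ unmixed'' in this setting, so that Theorem \ref{v2} is literally a corollary of Theorem \ref{vanish}. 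Note also that your reduction device is already used in the paper in the proof of Theorem \ref{Thm:main}(b), where unmixedness of $G_J(A)$ is used to conclude $\wt{J^n}=J^n$, so your argument is entirely within the paper's toolkit.
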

\begin{proof}
   The argument is similar to the proof of Theorem~\ref{vanish}, and hence we omit the details.
\end{proof}

\begin{corollary}
    Let $A$ be a Cohen-Macaulay ring of dimension $d\geq 1$ having canonical module $\omega_A.$ Let $\operatorname{ht}(\m)=d=\dim(A)$ for all maximal ideal $\m.$ Let $I=\q_1\cap \ldots \cap q_s$ be a regular radical ideal of $A$ such that $1\leq \operatorname{ht}(\q_i)<d.$ Then $r(I^n,\omega_A)=\wt{I^n}$ for all $n\geq 1$ if the following conditions hold:
    \begin{enumerate}
        \item $A_{q_i}$ is Gorenstein for all $i=1,\ldots ,s.$
        \item $G_{I}(A)$ is unmixed.
    \end{enumerate}
\end{corollary}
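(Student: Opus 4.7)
The plan is to globalize Theorem~\ref{v2}(3) by localizing at each maximal ideal of $A$ and using the Gorenstein hypothesis on $A_{\q_i}$ to furnish a free localization of $\omega_A$ at a suitable prime. Since both $r(I^n,-)$ and $\wt{(-)}$ commute with localization, to prove $r(I^n,\omega_A)=\wt{I^n}$ it suffices to verify the equality after localizing at every maximal ideal $\m$ of $A$. Fix such an $\m$. If $I\not\subseteq\m$, then $I_{\m}=A_{\m}$ and both $r(I_{\m}^n,(\omega_A)_{\m})$ and $\wt{I_{\m}^n}$ collapse to $A_{\m}$, so the equality is trivial.

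Otherwise $I\subseteq\m$, so some $\q_i\subseteq\m$ (since $\m$ is prime and $I=\bigcap_j\q_j$). The inequality $\operatorname{ht}(\q_i)<d=\operatorname{ht}(\m)$ forces $\q_i\subsetneq\m$, so $(\q_i)_{\m}$ is a proper prime of $A_{\m}$ containing $I_{\m}$. I would then apply Theorem~\ref{v2}(3) to the local ring $(A_{\m},\m A_{\m})$ with module $(\omega_A)_{\m}$, ideal $I_{\m}$, and prime $(\q_i)_{\m}$. The background hypotheses inherited from Theorem~\ref{vanish} are straightforward to verify: $A_{\m}$ is Cohen--Macaulay local of dimension $d\ge 1$ by the height condition; $(\omega_A)_{\m}$ is the canonical module of $A_{\m}$ (see \cite{Bruns}, Proposition 3.3.14), hence maximal Cohen--Macaulay and faithful; and $I_{\m}$ is regular (a non-zero divisor of $I$ remains one in $A_{\m}$) and radical. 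The crucial freeness condition of Theorem~\ref{v2}(3) follows from hypothesis (1): $(\omega_A)_{\q_i}\cong \omega_{A_{\q_i}}\cong A_{\q_i}$ because $A_{\q_i}$ is Gorenstein, so $((\omega_A)_{\m})_{(\q_i)_{\m}}$ is free.

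The main obstacle will be transferring the unmixedness of $G_I(A)$ to $G_{I_{\m}}(A_{\m})$. The height assumption $\operatorname{ht}(\m)=d$ ensures $A$ is equidimensional, and Proposition~\ref{UE}(1) then gives equidimensionality of $G_I(A)$; since localization cannot introduce embedded primes and the dimension is preserved by the maximality of $\operatorname{ht}(\m)$, the absence of embedded associated primes should pass to $G_{I_{\m}}(A_{\m})\cong G_I(A)\otimes_A A_{\m}$. Once this is in place, Theorem~\ref{v2}(3) delivers $r(I_{\m}^n,(\omega_A)_{\m})=\wt{I_{\m}^n}$ for every $n\ge 1$ and every maximal ideal $\m$, completing the proof.
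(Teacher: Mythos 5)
Your proposal is correct and takes essentially the same route as the paper: reduce to the local case by localizing at maximal ideals, transfer the unmixedness (and equidimensionality, via Proposition \ref{UE}(1)) of $G_I(A)$ to $G_{I_{\m}}(A_{\m})$, and use the Gorenstein hypothesis at $\q_i$ to make $(\omega_A)_{\q_i}$ free so that Theorem \ref{v2} forces the graded module $\bigoplus_{n\geq 1} r(I^n,\omega_A)/\wt{I^n}$ to vanish locally. The only cosmetic difference is that the paper argues by contradiction through part (2) of Theorem \ref{v2} (assuming $E\neq 0$, picking $\m$ with $E_{\m}\neq 0$, and contradicting $E_{\q_i}=0$), whereas you invoke part (3) directly at each maximal ideal, which encapsulates the same argument.
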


\begin{proof}

Let
$
E=\bigoplus_{n\geq 1} {r(I^n,\omega_A)}/{\widetilde{I^n}}.
$
Suppose that $E\neq 0$. Then there exists a maximal ideal $\mathfrak m$ such that
$
E_{\mathfrak m}
=\bigoplus_{n\geq 1}
{r(I^n_{\mathfrak m},(\omega_A)_{\mathfrak m})}/
{\widetilde{I^n_{\mathfrak m}}}
\neq 0.
$
Since $G_I(A)$ is unmixed and equidimensional (see Proposition~\ref{UE}(1)), {it follows that
$G_{I_{\mathfrak m}}(A_{\mathfrak m})$ is also unmixed and equidimensional for every
maximal ideal $\mathfrak m$.} Hence, from $E_{\mathfrak m}\neq 0$, we obtain
$E_{\mathfrak q_i}\neq 0$ for some $1\le i\le s$.

By part~(2) of Theorem~\ref{v2}, we have
$
\dim(E_{\mathfrak q_i})=\operatorname{ht}(\mathfrak q_i)\ge 1.
$
Thus $\dim(E_{\mathfrak q_i})>0$ as a module over the extended Rees algebra
$
\widehat{\mathcal R(I_{\q_i})}
=\bigoplus_{n\in\mathbb Z} I^n_{\q_i} t^n,
$
where $I^n_{\q_i}=A_{\q_i}$ for all $n\le 0$.

On the other hand, since $A_{\mathfrak q_i}$ is Gorenstein, we have
$$
E_{\mathfrak q_i}
=\bigoplus_{n\geq 1}
{r(I^n_{\mathfrak q_i},(\omega_A)_{\mathfrak q_i})}/
{\widetilde{I^n_{\mathfrak q_i}}}
=
\bigoplus_{n\geq 1}
{r(I^n_{\mathfrak q_i},A_{\mathfrak q_i})}/
{\widetilde{I^n_{\mathfrak q_i}}}
=0,
$$
which is a contradiction. Therefore, $E=0$.

\end{proof}

\providecommand{\bysame}{\leavevmode\hbox to3em{\hrulefill}\thinspace}
\providecommand{\MR}{\relax\ifhmode\unskip\space\fi MR }
\providecommand{\MRhref}[2]{
  \href{http://www.ams.org/mathscinet-getitem?mr=#1}{#2}
}

\end{document}